\documentclass[11pt]{amsart}

\usepackage{amssymb, graphicx}
\usepackage{bbm,titletoc} 
\usepackage[normalem]{ulem}

\renewcommand{\subset}{\subseteq}
\renewcommand{\emptyset}{\varnothing}

\usepackage{enumerate}
\usepackage[usenames,dvipsnames]{color}

  \hfuzz=5.002pt 
  \vfuzz=8.002pt

\makeatletter
\def\moverlay{\mathpalette\mov@rlay}
\def\mov@rlay#1#2{\leavevmode\vtop{%
   \baselineskip\z@skip \lineskiplimit-\maxdimen
   \ialign{\hfil$#1##$\hfil\cr#2\crcr}}}

\makeatother

\numberwithin{equation}{section}

\definecolor{Ruta}{rgb}{0.309, 0.459, 0.208}
\definecolor{Vino}{rgb}{0.256,0,0}
\definecolor{Siva}{rgb}{0.116,0.116,0.116}
\definecolor{Siva'}{rgb}{0.250,0.116,0.116}

\usepackage{amsfonts}
\usepackage{amsmath}
\usepackage{verbatim}

\usepackage[pagebackref,colorlinks,linkcolor=red,citecolor=blue,urlcolor=blue,hypertexnames=true]{hyperref}

\usepackage{blkarray}
\usepackage{multirow}
\usepackage{xspace} 
\usepackage{multirow,bigdelim}

\newcommand{\cc}{^{\bf{c}}}
\newcommand{\rr}{^{\bf{r}}}

\def\beq{ \begin{equation} }
\def\eeq{ \end{equation} }

\def\bep{\begin{proof}}
\def\eep{\end{proof}}

\def\ben{ \begin{enumerate} }
\def\een{ \end{enumerate} }

\newcommand{\id}{{\rm id}}

\newcommand{\tnz}{\otimes}
\newcommand{\ol}{\overline}
\newcommand{\tr}{\mathrm{tr}}

\addtolength{\textwidth}{3cm} \addtolength{\oddsidemargin}{-1.5cm}
\addtolength{\evensidemargin}{-1.5cm}
\allowdisplaybreaks[1]

\newtheorem{theorem}{Theorem}[section]
\newtheorem{proposition}[theorem]{Proposition}

\newtheorem{corollary}[theorem]{Corollary}

\theoremstyle{definition}

\newtheorem{lemma}[theorem]{Lemma}

\newtheorem{remark}[theorem]{Remark}

\newtheorem{example}[theorem]{Example}

\newcommand{\X}{\langle x\rangle}



\linespread{1.0334}

\newcommand{\s}{\sigma}

\newcommand{\N}{\mathbb{N}}

\newcommand{\F}{\mathbb{F}}

\newcommand{\g}{m}
\newcommand{\td}{\tilde}

\newcommand{\m}{s}  
\newcommand{\n}{r}

\newcommand{\inn}{{\rm in}}
\newcommand{\sym}{{\rm Sym\,}}
\newcommand{\com}{\mathsf{c}}

\newcommand{\st}{\;\ifnum\currentgrouptype=16 \middle\fi|\;}

\usepackage{graphicx}
\usepackage{calc}
\newlength{\depthofsumsign}
\setlength{\depthofsumsign}{\depthof{$\sum$}}

\makeatletter
\newcommand*{\rom}[1]{\expandafter\@slowromancap\romannumeral #1@}
\makeatother

\begin{document}

\title[Functional identities on matrix algebras]{Functional identities on matrix algebras}

\author{Matej Bre\v sar}
\author{\v Spela \v Spenko}
\address{M. Bre\v sar,  Faculty of Mathematics and Physics,  University of Ljubljana,
 and Faculty of Natural Sciences and Mathematics, University
of Maribor, Slovenia} \email{matej.bresar@fmf.uni-lj.si}
\address{\v S. \v Spenko,  Institute of  Mathematics, Physics, and Mechanics,  Ljubljana, Slovenia} \email{spela.spenko@imfm.si}

\begin{abstract}
Complete solutions of functional identities  $\sum_{k\in K}F_k(\overline{x}_m^k)x_k = \sum_{l\in L}x_lG_l(\overline{x}_m^l)$ on the matrix algebra $M_n(\F)$ are given. The nonstandard parts of these solutions turn out to follow from the Cayley-Hamilton identity.
\end{abstract}

\keywords{Functional identities, generalized polynomial identities,  matrix algebra,  Cayley-Hamilton theorem, syzygies, Gr\" obner bases.}
\thanks{2010 {\em Math. Subj. Class.} Primary 16R60. Secondary 13D02, 13P10,
 16R50.}
\thanks{Supported by ARRS Grant P1-0288.}

\maketitle

\section{Introduction}

A functional identity on a ring $R$ is an identical relation that involves arbitrary elements from $R$ along with functions which are considered as unknowns. 
The fundamental example of such an identity is
\begin{equation}\label{e1}
\sum_{k\in K}F_k(\overline{x}_m^k)x_k = \sum_{l\in L}x_lG_l(\overline{x}_m^l)\quad\mbox{for all $x_1\dots,x_m\in R$},
\end{equation}
where $\overline{x}_m^k = (x_1,\dots,\hat x_{k},\dots,x_m)$, $K$ and $L$ are subsets of $\{1,\dots,m\}$, and $F_k,G_l$ are arbitrary functions from $R^{m-1}$ to $R$. The description of these functions, which is the usual  goal when facing \eqref{e1}, has turned to be applicable to various mathematical problems. We refer the reader to  \cite{FIbook} for an account of the theory of functional identities and its applications.

 Up until very recently functional identities have been studied only in rings in which \eqref{e1} has only so-called {\em standard solutions} (see  Subsection \ref{sub41} for the definition). This excludes the  basic case where $R=M_n(\mathbb F)$, the algebra of $n\times n$ matrices over a field $\mathbb F$, unless   $|K|\le n$ and $|L|\le n$. 
 Various applications of  the general theory of functional identities
 therefore do not cover $M_n(\mathbb F)$ with small $n$, although  they yield definitive results for large classes of algebras whose dimension is either infinite or finite but big enough.
 
 The recent papers \cite{BS14} and \cite{BPS14}  give, to the best of our knowledge, the first complete results on functional identities of $M_n(\mathbb F)$.  The first one considers functional identities in one variable, and the second one  considers quasi-identities; i.e., identities of the type \eqref{e1} with $L=\emptyset$ and each $F_k$ being a sum of  scalar-valued functions multiplied by  noncommutative monomials. In the present paper we take a step further and consider the general functional identity \eqref{e1}. We will give a full description of the functions $F_k$ and $G_l$ under the natural assumption that they are   multilinear. Thus, in some sense we can say now that functional identities are finally understood in both finite and infinite dimensional context.
 
The Cayley-Hamilton theorem yields the fundamental example of a functional identity of $M_n(\mathbb F)$ which does not have only standard solutions. We call it the {\em Cayley-Hamilton identity}. From the results in \cite{BS14} it is evident that all nonstandard solutions of functional identities in one variable of $M_n(\mathbb F)$ follow from the Cayley-Hamilton identity, while the main result of \cite{BPS14} shows that there exist quasi-identities of $M_n(\mathbb F)$ which are not a consequence of the Cayley-Hamilton identity; in more technical terms, the T-ideal of all  quasi-identities is not generated by the Cayley-Hamilton identity. The results in this paper will show that  the nonstandard parts of the solutions of \eqref{e1} follow from the  Cayley-Hamilton identity. (This does not contradict the result of \cite{BPS14} since the T-ideal of quasi-identities is a proper subset of the set of identities treated here.)  Our main idea of handling \eqref{e1} is to  interpret this functional  identity 
 coordinate-wise and then apply the theory of syzygies  (in fact, functional identities may be viewed as a kind of ``noncommutative syzygies").

The paper is organized as follows. In Section \ref{secgpi} we give some remarks on generalized polynomial identities, which are, in the context of matrix algebras,  more general than the functional identities \eqref{e1}. Our main results are obtained in Sections  \ref{stri} and  \ref{s4}.
In Section  \ref{stri} we describe all solutions of  the
 one-sided functional identities, i.e., those with  $K=\emptyset$ or  $L=\emptyset$, 
by applying the result on generators of syzygies on generic matrices from   \cite{On}. 
In  Section \ref{s4}, which is basically independent of Section \ref{stri}, we show  that every solution of the two-sided functional identity \eqref{e1} is {\em standard modulo one-sided identities}.  Roughly speaking, this means that it can be expressed by standard solutions and solutions of one-sided identities.
A precise definition is given in Subsection \ref{sub41}. We end the paper with an application to the  theory of commuting functions.

\section{Generalized polynomial identities}\label{secgpi}

It is well-known that the T-ideal of trace identities of $M_n(\mathbb F)$ is generated, as a T-ideal, by the Cayley-Hamilton identity \cite[p. 444]{Proc07}. In this sense every polynomial identity is a consequence of the Cayley-Hamilton identity. We have addressed ourselves the question whether a similar statement holds for the generalized polynomial identities. As we shall see, the answer is positive in the  char$(\mathbb F) = 0$ case, and the proof is not difficult. From the nature of this result one would expect that it should be known, but we were unable to find it in the literature.

Let us recall the necessary definitions on generalized polynomial identities (for more details see \cite{BMM}).
  Let $x = \{x_1,x_2,\dots\}$ be a set of noncommuting indeterminates, and let $\mathbb F\langle x\rangle$ be the free algebra on $X$. Consider the free product  $M_n(\F)\ast\F\X$ over $\F$. Its elements are sometimes called {\em generalized polynomials}. Informally they can be viewed as sums of expressions of the form $a_{i_0}x_{j_1}a_{i_1}\dots a_{i_{k-1}}x_{j_k}a_{i_k}$ where $a_{i_\ell}\in M_n(\F)$. Given $f=f(x_1,\dots,x_k)\in M_n(\F)\ast\F\X$ and $b_1,\dots,b_k\in M_n(\F)$,
we define the {\em evaluation} $f(b_1,\dots,b_k)$ in the obvious way. We say that $f$ is a {\em generalized polynomial identity} (GPI) of $ M_n(\F)$ if $f(b_1,\dots,b_k) =0$ for 
all $b_i\in M_n(\F)$. For example, if $e$ is a rank one idempotent in $M_n(\F)$, then $[ex_1e,ex_2e]$ is readily a GPI.
 We remark that 
every identity of the form \eqref{e1} can be interpreted as a GPI. This is because every multilinear function $F:M_n(\F)^{m-1}\to M_n(\F)$ is equal to a sum of functions of the form
$(b_1,\dots,b_{m-1})\mapsto a_{i_0}b_{j_1}a_{i_1}\dots a_{i_{m-2}}b_{j_{m-1}}a_{i_{m-1}}$ where $\{j_1,\dots,j_{m-1}\} = \{1,\dots,m-1\}$. 
 
An ideal $I$ of $M_n(\F)\ast\F\X$ is said to be a {\em T-ideal}
if $f(x_1,\dots,x_k)\in I$ and $g_1,\dots,g_k\in M_n(\F)\ast\F\X$ implies $f(g_1,\dots,g_k)\in I$. The set of all GPI's of $M_n(\F)$ is clearly a T-ideal. The next proposition was obtained, in some form, already in \cite{Lit31}, and later extended to considerably more general rings by Beidar (see e.g.~ \cite{BMM}). We will give a short alternative proof. Let us first recall the following elementary fact: If  an algebra $B$ contains a set of $n\times n$ matrix units; i.e., a set of elements $e_{ij}$, $1\leq i,j\leq n$, satisfying  
$$e_{ij}e_{kl}=\delta_{jk}e_{il},\quad \sum_{i=1}^n e_{ii}=1,$$
then $B$ is isomorphic to the matrix algebra $M_n(A)$ where $A=e_{11}Be_{11}$.

\begin{proposition}[{\cite[Proposition 6.5.5]{BMM}}]\label{gpi}
Let $e$ be a rank one idempotent in $M_n(\F)$.
\begin{enumerate}
\item If $|\F|=\infty$, then the T-ideal of all GPI's of $M_n(\F)$ is generated by $[ex_1e,ex_2e]$.
\item  If $|\F|=q$,  then the T-ideal of all GPI's of $M_n(\F)$ is generated by $[ex_1e,ex_2e]$ and $(ex_1e)^q-ex_1e$.
\end{enumerate}
\end{proposition}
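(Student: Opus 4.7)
The plan is to apply the matrix-units observation just above to $F := M_n(\F)\ast\F\X$ itself: its copy of $M_n(\F)$ carries matrix units $e_{ij}$, hence $F\cong M_n(C)$ where $C := e_{11}Fe_{11}$. First I would identify $C$ with the free $\F$-algebra $\F\langle y^{(i)}_{jk} : i\ge 1,\ 1\le j,k\le n\rangle$, with $y^{(i)}_{jk}$ corresponding to $e_{1j}x_ie_{k1}\in e_{11}Fe_{11}$; under the resulting isomorphism $F\cong M_n(\F\langle y^{(i)}_{jk}\rangle)$ each generator $x_i\in F$ becomes the generic matrix $Y_i=(y^{(i)}_{jk})_{j,k}$. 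The universal property of the free product together with the freeness of $\F\langle y^{(i)}_{jk}\rangle$ gives mutually inverse maps between these two algebras.

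Next I would translate the GPI condition into an ordinary PI condition for $\F$. Evaluating $f\in F$ at matrices $B_1,\ldots,B_m\in M_n(\F)$ corresponds, under $F\cong M_n(C)$, to substituting each $y^{(i)}_{jk}$ by the scalar entry $(B_i)_{jk}\in\F$. Thus $f$ is a GPI of $M_n(\F)$ if and only if every entry of the matrix $f\in M_n(C)$, viewed as a noncommutative polynomial in the $y^{(i)}_{jk}$, is a polynomial identity of $\F$ itself.

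The classical characterization of the polynomial identities of $\F$ then takes over. When $|\F|=\infty$ these identities form the T-ideal $J\subseteq C$ generated by $[y_1,y_2]$ (commutativity), while when $|\F|=q$ they form the T-ideal $J'$ generated by $[y_1,y_2]$ and $y_1^q-y_1$. The step I expect to demand the most care is matching T-ideals across the isomorphism $F\cong M_n(C)$: the T-ideal $T$ of $F$ generated by $[ex_1e,ex_2e]$ should coincide with $M_n(J)$, and in case (2) the T-ideal $T'$ generated by the two prescribed elements should coincide with $M_n(J')$. This rests on two facts: every two-sided ideal of $M_n(C)$ has the form $M_n(K)$ for some ideal $K\subseteq C$; and a substitution $x_i\mapsto g_i\in F$ corresponds to the independent entry-wise substitutions $y^{(i)}_{jk}\mapsto (g_i)_{jk}\in C$, so T-closure in $F$ precisely matches T-closure in $C$. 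Since $[e_{11}x_1 e_{11},e_{11}x_2 e_{11}]$ corresponds under the isomorphism to the matrix with $[y^{(1)}_{11},y^{(2)}_{11}]$ in the $(1,1)$-slot and zeros elsewhere (and, in case (2), $(e_{11}x_1e_{11})^q-e_{11}x_1 e_{11}$ corresponds to $(y^{(1)}_{11})^q-y^{(1)}_{11}$ in that slot), a short computation yields $T=M_n(J)$ and $T'=M_n(J')$.

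Combining the three steps, $f\in F$ is a GPI of $M_n(\F)$ if and only if every entry of $f$ lies in $J$ (resp.\ $J'$), if and only if $f\in M_n(J)=T$ (resp.\ $f\in M_n(J')=T'$). This establishes (1) and (2) simultaneously.
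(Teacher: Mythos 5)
Your proposal is correct and follows essentially the same route as the paper: both identify $M_n(\F)\ast\F\X$ with $M_n(A)$ for $A=e_{11}\bigl(M_n(\F)\ast\F\X\bigr)e_{11}$ a free algebra on the coordinates $e_{1i}x_ke_{j1}$, reduce the GPI condition to the statement that each entry vanishes under all scalar substitutions, and invoke the classical description of such polynomials via $[y_1,y_2]$ (and $y_1^q-y_1$ in the finite case). Your T-ideal matching $T=M_n(J)$ is just a repackaging of the paper's decomposition $f=\sum_{i,j}e_{i1}(e_{1i}fe_{j1})e_{1j}$, so the arguments coincide in substance.
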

\begin{proof}
Pick a set of matrix units $e_{ij}$, $1\leq i,j\leq n$, of $M_n(\F)$ so that $e_{11}=e$. 
As $M_n(\F)\ast\F\X$ contains  $M_n(\F)$ as a subalgebra, it also contains all $e_{ij}$. Accordingly, $M_n(\F)\ast\F\X\cong M_n(A)$ where  $A = e_{11}\bigl(M_n(\F)\ast\F\X\bigr)e_{11}$. Since  $M_n(\F)\ast\F\X$  is generated 
by the elements $$e_{si}x_k e_{jt} = e_{s1}(e_{1i}x_k e_{j1})e_{1t},$$ it follows that $A$ is generated by the elements 
$$x_{ij}^{(k)}:= e_{1i}x_k e_{j1},\quad 1\le i,j \le n,\,\,\, k = 1,2,\dots$$ 
Note that $A$ is actually the free algebra on the set $\overline{x}:=\{x_{ij}^{(k)}\,|\,1\le i,j \le n,\,\, k = 1,2,\dots\}$.

Let ${\rm Hom}_{M_n}(M_n(\F)\ast \F\X,M_n(\F))$ denote the set of algebra homomorphisms from $M_n(\F)\ast \F\X$ to $M_n(\F)$ that act as the identity on $M_n(\F)$. Identifying
$M_n(\F)\ast \F\X$ with $M_n(A)$ one easily sees
 that there is a canonical isomorphism 
$${\rm Hom}_{M_n}(M_n(\F)\ast \F\X,M_n(\F))\cong  {\rm Hom}(A,\F).$$
Now take a GPI $f$ of $M_n(\F)$. This means 
that $$f\in \cap_{\phi\in {\rm Hom}_{M_n}(M_n(\F)\ast \F\X,M_n(\F))}{\rm ker}\phi,$$ or equivalently,   $$ e_{1 i}fe_{j1}\in  
\cap_{\phi\in {\rm Hom}(A,\F)}\ker \phi$$ for every $1\leq i,j\leq n$. 
Since $A$ is the free algebra on $\overline{x}$ it can be easily shown  that $\cap_{\phi\in {\rm Hom}(A,\F)}\ker \phi$ is generated by $$[x_{ij}^{(k)},x_{pq}^{(l)}] = [ex_{ij}^{(k)}e,ex_{pq}^{(l)}e]$$ and 
additionally by $$\Bigl(x_{ij}^{(k)}\Bigr)^q - x_{ij}^{(k)} = \Bigl(ex_{ij}^{(k)}e\Bigr)^q - ex_{ij}^{(k)}e $$
 if $|\F|=q$. Hence $f= \sum_{i,j} e_{i1}(e_{1i}fe_{j1})e_{1j}$ lies in the T-ideal generated by $[ex_1e,ex_2e]$, and additionally by $(ex_1e)^q-ex_1e$ if  $|\F|=q$. 
\end{proof}

Let   
$$
q_n(x_1) = x_1^n  +  \tau_1(x_1)x_1^{n-1} + \cdots + \tau_n(x_1)
$$
denote  the Cayley-Hamilton polynomial. Thus, $\tau(x_1) = -{\rm tr}(x_1)$ and $\tau_n(x_1)=(-1)^n \det(x_1)$. Until the end of this section we assume that  char$(\F)=0$. Then each $\tau_i(x_1)$ can be expressed   as a $\mathbb{Q}$-linear combination of the products of  ${\rm tr}(x_1^j)$.  Let  
$$Q_n=Q_n(x_1,\ldots,x_n)$$ denote the multilinear version of $q_n(x_1)$ obtained by full polarization.
  For example, $$Q_2(x_1,x_2) = x_1x_2 + x_2x_1 -\tr(x_1)x_2 - \tr(x_2)x_1 + \tr(x_1)\tr(x_2)- \tr(x_1x_2) .$$
  As $Q_n(b_1,\dots,b_n)=0$ for all $b_i\in M_n(\F)$, we call $Q_n$ the {\em Cayley-Hamilton identity}. 
 Recall that $Q_n$ can be written as
\begin{equation}
\label{ch}Q_n=\sum_{\sigma\in S_{n+1}}(-1)^\sigma\phi_\sigma(x_1,\ldots,x_n)
\end{equation} where $(-1)^\sigma=\pm 1$ denotes the sign of the permutation $\sigma$ and  $\phi_\sigma$ is defined using the cycle decomposition of $$\sigma=(i_1,\ldots, i_{k_1})(j_1,\ldots ,j_{k_2})\ldots (u_1,\ldots ,u_{k_t})(s_1,\ldots s_{k }, n+1)$$ as
$$\phi_\sigma(x_1,\ldots,x_n) =\tr(x_{i_1} \cdots x_{i_{k_1}})\tr(x_{j_1} \cdots x_{j_{k_2}})\cdots \tr(x_{u_1} \cdots x_{u_{k_t}})x_{s_1}\cdots x_{s_{k }}.$$
Regarding ${\rm tr}(y)$ as $\sum_{i,j} e_{ij}y e_{ji}$, and consequently  ${\rm tr}(y_1\dots y_k)$ as $\sum_{i,j} e_{ij}y_1\dots y_k e_{ji}$, we see that we may consider  $Q_n$ as a GPI of $M_n(\F)$.
\begin{corollary}
 If {\rm char}$(\F) =0$,  then the T-ideal of all GPI's of $M_n(\F)$ is generated by the Cayley-Hamilton identity $Q_n$. 
\end{corollary}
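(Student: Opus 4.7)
By Proposition~\ref{gpi}(1), applicable since $\mathrm{char}(\F)=0$ implies $|\F|=\infty$, the T-ideal of all GPI's of $M_n(\F)$ is generated by $g := [ex_1e, ex_2e]$. Since $Q_n$ is noted in the preceding paragraph to be a GPI, the inclusion $\wtd{T}(Q_n)\subseteq\{\text{GPI's of }M_n(\F)\}$ is immediate, so the statement reduces to showing that $g$ lies in $\wtd{T}(Q_n)$, the T-ideal in $M_n(\F)\ast\F\X$ generated by $Q_n$; this will be the goal.

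To derive $g$ from $Q_n$, I would exploit the rank-one structure imposed by the idempotent $e$. The subalgebra $eM_n(\F)e=\F e$ is one-dimensional and commutative, reflecting the fact that Cayley--Hamilton applied to a rank-one matrix $r$ collapses to $r^2=\tr(r)\,r$; polarizing this relation for two rank-one matrices $r_1,r_2\in\F e$ forces $r_1 r_2=r_2 r_1$. The plan is to mimic this algebraic mechanism inside $M_n(\F)\ast\F\X$: consider substitutions such as $Q_n(ex_1e,ex_2e,e,\ldots,e)$ and variants like $Q_n(ex_1,x_2e,e,\ldots,e)$, sandwich them between $e$'s, and combine them linearly---simplifying via $e^2=e$ and $\tr(e)=1$---to produce $g$.

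The main obstacle will be to isolate the antisymmetric combination $g=ex_1ex_2e-ex_2ex_1e$ from $Q_n$, which is itself fully symmetric in its $n$ arguments. This forces the use of substitutions whose asymmetry is carried entirely by the placement of the $e$'s (so that swapping the roles of $x_1$ and $x_2$ produces a genuinely new element), followed by antisymmetrization in $x_1,x_2$. I expect that for $n=2$ the computation can be carried out by hand using the expanded form of $Q_2$, while for general $n$ an induction on $n$---substituting $x_i\mapsto e$ in some slots and invoking the polarization formula~(\ref{ch})---should reduce to the base case.
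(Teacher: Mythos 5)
Your opening reduction is exactly the paper's first step: by Proposition \ref{gpi}(1) (applicable since char$(\F)=0$ gives $|\F|=\infty$) it suffices to show that $[ex_1e,ex_2e]$ lies in the T-ideal generated by $Q_n$. But from that point on you only offer a plan, not a proof, and the plan has a genuine gap: the decisive computation producing $[ex_1e,ex_2e]$ from $Q_n$ is never carried out. You correctly observe that the naive substitution $Q_n(ex_1e,ex_2e,e,\dots,e)$ antisymmetrized in $x_1,x_2$ dies by symmetry, but the proposed remedy (``variants like $Q_n(ex_1,x_2e,e,\dots,e)$, sandwiched between $e$'s and antisymmetrized'') is left as an expectation, and it does not obviously work: already for $n=2$ one finds
$$e Q_2(ex_1,x_2e)e - e Q_2(ex_2,x_1e)e = \bigl(ex_1x_2e-ex_2x_1e\bigr)-\bigl(ex_1ex_2e-ex_2ex_1e\bigr)-\bigl(\tr(x_1x_2e)-\tr(x_2x_1e)\bigr)e,$$
so the wanted commutator appears only alongside extra terms that would still have to be removed inside the T-ideal; nothing in your sketch explains how, nor how the proposed induction on $n$ would run. (Your heuristic that polarizing $r^2=\tr(r)r$ on $\F e$ ``forces'' commutativity is also off: the polarized identity is $r_1r_2+r_2r_1=\tr(r_1)r_2+\tr(r_2)r_1$, which does not yield the commutator, and in any case the rank-one relation $r^2=\tr(r)r$ is itself not available formally from $Q_n$ without an argument.)

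The paper's trick is different and avoids the symmetry obstacle entirely: substitute the commutator itself into one slot, i.e.\ evaluate $Q_n\bigl([ex_1e,ex_2e],e,\dots,e\bigr)$, which lies in the T-ideal of $Q_n$ by definition. Since $e[ex_1e,ex_2e]e=[ex_1e,ex_2e]$ and $\tr\bigl([ex_1e,ex_2e]\bigr)=0$ (more generally any trace factor containing the commutator and otherwise only $e$'s vanishes), every $\phi_\sigma$ in which the first variable sits inside a trace dies, while $\tr(e^j)=1$ makes the surviving terms equal to $[ex_1e,ex_2e]$ with a sign; counting permutations (only last cycles of length $n$ or $n+1$ contribute, the rest cancel by sign) gives $Q_n\bigl([ex_1e,ex_2e],e,\dots,e\bigr)=(-1)^n(n-1)!\,[ex_1e,ex_2e]$, and char$(\F)=0$ lets one divide by $(n-1)!$. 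Without this (or some worked-out substitute), your argument stops at the statement of the problem rather than solving it.
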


\begin{proof}
It suffices to show that the basic identity 
$$[ex_1e,ex_2e]=ex_1ex_2e-ex_2ex_1e\in M_n(\F)\ast \F\X,$$
where $e$ is a rank one idempotent, follows from the Cayley-Hamilton identity. 
To this end we insert $[ex_1e,ex_2e]$ for   one variable   and $e$ for the others in $Q_n$. 
Note that $1$ needs to be in the last cycle of $\sigma$ for $\phi_\sigma([ex_1e,ex_2e],e,\dots,e)$ to be nonzero. 
In this case $\phi_\sigma([ex_1e,ex_2e],e,\dots,e)=[ex_1e,ex_2e]$. 
Thus, we need to count the number of such permutations with the corresponding signs. 

Take a cycle $\tau$. 
Note that permutations with the corresponding signs having the last cycle $\tau$ do not sum to zero only if $\tau$ is of length $n$ or $n+1$. 
For $\tau$ of length $n$ we have $(n-1)(n-1)!$ permutations of sign $(-1)^{n-1}$, while for $\tau$ of length $n+1$ the number of permutation is $n!$ and all have sign $(-1)^{n}$. Hence, 
$$Q_n\big([ex_1e,ex_2e],e,\dots,e\big)
=(-1)^{n}(n-1)![ex_1e,ex_2e].$$
\end{proof}

\section{One-sided functional identities and syzygies}\label{stri}
One-sided functional identities are intimately connected with syzygies on generic matrices. 
We first recall a result on syzygies that will be used in the sequel. 
\subsection{Syzygies on a generic matrix}
First we
introduce some auxiliary notation. Let $r,s\ge 1$, let  $C_y=\F[y_{ij}\mid 1\leq i\leq \n,1\leq j\leq \m]$ be a polynomial algebra, 
and let $u_1,\dots,u_\n$ be generators of the free module $C_y^n$. 
The matrix of independent commutative variables $Y=(y_{ij})_{1\leq i\leq \n,1\leq j\leq \m}$ is called a generic $\n\times \m$-matrix. 
With a syzygy on $Y$ we mean a syzygy on the rows of $Y$; i.e., an element $\sum_{i=1}^\n f_i u_i\in C_y^\n$ with the property $\sum_{i=1}^\n f_iy_{ij}=0$ for $j=1,\dots, \m$. If $\m\leq \n$ we denote by $[ j_1,\dots,j_\m]$, 
where $1\leq j_1<\cdots<j_\m\leq \n$,
  the determinant of the $\m\times \m$-submatrix of $Y$, obtained by restricting $Y$ to the rows indexed by $j_1,\dots,j_\m$. 

\begin{theorem}[{\cite[Theorem 7.2]{On}}]\label{onn}
Let $1\leq \m< \n$ and let $Y$ be a generic $\m\times \n$-matrix. 
The set of determinantal relations
$$
G=\bigg\{\sum_{\ell=0}^\m(-1)^\ell[ j_0,\dots,\hat j_\ell,\dots,j_\m]u_{j_\ell}\Bigm\vert 1\leq j_0<j_1<\cdots<j_\m\leq n\bigg\}
$$
generates the module of syzygies on $Y$ and is a Gr\"{o}bner basis for it with respect to any lexicographic monomial order on $C_y^\n$.
\end{theorem}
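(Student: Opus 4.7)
The plan is to combine a direct verification that each element of $G$ is a syzygy with a Buchberger-type check that $G$ is a Gr\"obner basis of the submodule it generates; generation of the full syzygy module then follows by comparing leading modules.

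First I would verify membership: given $g=\sum_{\ell=0}^{\m}(-1)^\ell [j_0,\dots,\hat j_\ell,\dots,j_\m]u_{j_\ell}\in G$ and any column index $k\in\{1,\dots,\m\}$, form the $(\m+1)\times(\m+1)$ matrix whose first $\m$ columns are the restrictions of the columns of $Y$ to the rows $j_0,\dots,j_\m$ and whose last column repeats the $k$-th of these. Laplace expansion along the last column, together with the vanishing of that determinant (two equal columns), yields exactly $\sum_{\ell}(-1)^\ell [j_0,\dots,\hat j_\ell,\dots,j_\m]y_{j_\ell,k}=0$, which says $g$ is a syzygy.

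Next I would fix a concrete lex order: on the variables set $y_{ij}>y_{i'j'}$ iff $i<i'$, or $i=i'$ and $j<j'$, and extend this to a position-over-term order on $C_y^{\n}$ by declaring $u_i>u_{i'}$ whenever $i>i'$. With respect to this ordering, the leading term of $g_{j_0,\dots,j_\m}$ comes from the index $\ell=\m$ (which carries the largest position $u_{j_\m}$) and equals, up to sign, $y_{j_0,1}y_{j_1,2}\cdots y_{j_{\m-1},\m}\,u_{j_\m}$. I would then run Buchberger's criterion on pairs $g_J,g_{J'}$: the S-polynomial is nontrivial only when $|J\cap J'|=\m$, and in each such case it can be rewritten as a $C_y$-linear combination of generators $g_{J''}$ whose indexing $(\m+1)$-sets arise from standard Pl\"ucker exchange moves on $J\cup J'$, with cofactors of strictly smaller leading monomial. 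This shows $G$ is a Gr\"obner basis of the submodule $\langle G\rangle\subseteq\mathrm{Syz}(Y)$.

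Finally, to upgrade to generation, I would compare leading modules $\mathrm{in}(\langle G\rangle)\subseteq\mathrm{in}(\mathrm{Syz}(Y))$. The monomials in $C_y^{\n}$ outside $\mathrm{in}(\langle G\rangle)$ form a $C_y$-basis of a complement whose Hilbert series matches that of the cokernel of $\mathrm{Syz}(Y)\hookrightarrow C_y^{\n}$; one can extract this count from the Eagon--Northcott-type resolution associated to the maximal minors of a generic matrix, or prove it by induction on $\n$ starting from the case $\n=\m+1$, where both modules have rank one and are spanned by the single element $g_{1,\dots,\m+1}$. The main obstacle is the bookkeeping in the S-polynomial reductions: tracking signs across Pl\"ucker relations and checking that every cofactor has strictly smaller leading term is the delicate combinatorial core, after which both the Gr\"obner property and generation are essentially formal, and the independence from the choice of lex order follows from the symmetry of the construction under reindexing of rows and columns of $Y$.
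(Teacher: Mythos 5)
You should note at the outset that the paper offers no proof of this statement: it is quoted verbatim from Onn \cite[Theorem 7.2]{On}, so your argument is being measured against the literature, not against an internal argument of the paper. Your first step is fine: each element of $G$ is a syzygy, by Laplace expansion along the repeated column of the $(\m+1)\times(\m+1)$ matrix you describe. The overall architecture (Buchberger check for the submodule $\langle G\rangle$, then a comparison of initial modules, with the size of the true syzygy module supplied by a Buchsbaum--Rim/Eagon--Northcott-type resolution or by induction) is a viable strategy in principle; in fact, if you are willing to quote the exactness of the Buchsbaum--Rim complex of a generic map, generation of the syzygy module by $G$ follows at once and the Hilbert-series detour is unnecessary.

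There are, however, two genuine gaps. First, the theorem asserts that $G$ is a Gr\"obner basis with respect to \emph{any} lexicographic order on $C_y^\n$, and this universality is the real content of the cited result. You fix one variable order (row-major) and one module order (position over term) and then claim independence of the choice ``by symmetry under reindexing of rows and columns.'' That does not work: a lex order on $C_y$ is induced by an arbitrary total order on the $\n\m$ variables $y_{ij}$, and most such orders (e.g.\ one beginning $y_{11}>y_{22}>y_{12}>\cdots$) are not obtained from yours by permuting rows and columns, nor are all compatible module orders. Under a general lex order the leading term of a maximal minor need not be the diagonal monomial, so your description of the initial terms, and the entire S-pair analysis built on it, is tied to the one order you chose; the ``any lex order'' claim is simply not addressed. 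Second, even for your fixed order, the core of the Buchberger verification is asserted rather than carried out: the restriction to pairs with $|J\cap J'|=\m$ is unjustified (all pairs with $\max J=\max J'$ have leading terms in the same component, and the coprimality shortcut for discarding S-pairs is not automatic in the module setting), and the claim that each S-vector admits a standard expression via Pl\"ucker-type exchanges with cofactors of strictly smaller leading monomial --- which you yourself identify as ``the delicate combinatorial core'' --- is exactly the part that would constitute a proof. As it stands, the proposal is a plausible outline for one particular order, not a proof of the statement as cited.
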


\subsection{Solving left-sided functional identities}
We restrict ourselves to the {\em left-sided functional identities}; i.e., functional identities of the form $\sum_{k\in K} F_k(\overline{x}_m^k)x_k=0$. The right-sided functional identities  $\sum_{l\in L}x_lG_l(\overline{x}_m^l)=0$ can be of course treated in the same way. Besides, by applying the transpose operation to a  right-sided functional identity we obtain a left-sided functional identity, so that the results that we will obtain are more or less directly applicable to  right-sided functional identities.


The standard solution of $\sum_{k\in K} F_k(\overline{x}_m^k)x_k=0$ is defined as $F_k=0$ for each $k\in K$.  If $|K|\le n$, then there are no other solutions on $M_n(\F)$ -- this is an easy consequence of the general theory of functional identities (see e.g.~ \cite[Corollary 2.23]{FIbook}). 
To obtain a nonstandard solution we have to modify the Cayley-Hamilton identity. We take only its noncentral part and commute it with the product of a fixed matrix and a new variable. 
In this way we arrive at a basic functional identity on $M_n(\F)$ in  $n+1$ variables 
\beq \label{n-std}
\big[\td Q_n(a_1x_1,\dots,a_nx_n),a_{n+1}x_{n+1}\big]=0, 
\eeq
where $\td Q_n$ denotes the noncentral part of $Q_n$; i.e.,
$$\td Q_n(x_1,\dots,x_n)=\sum_{\sigma\in S_{n+1}\setminus S_n}\epsilon_\sigma\phi_\sigma(x_1,\ldots,x_n),$$
and $a_i\in M_n(\F)$. 
Note that \eqref{n-std} can be indeed interpreted as  a left-sided functional identity.
For example, in the case $n=2$ we have 
$$\td Q_2(x_1,x_2)=x_1x_2+x_2x_1-\tr(x_1)x_2-\tr(x_2)x_1,$$
so that
\begin{multline*}
\big[\td Q_2(a_1x_1,a_2x_2),a_3x_3\big]= \Big(
a_3x_3(-a_2x_2+\tr(a_2x_2))a_1\Big)x_1+ \Big(a_3x_3(-a_1x_1+\tr(a_1x_1))a_2\Big)x_2\\+ \Big(\bigl(a_1x_1a_2x_2+a_2x_2a_1x_1-\tr(a_1x_1)a_2x_2-\tr(a_2x_2)a_1x_1\bigr)a_3\Big)x_3 =0.
\end{multline*}
If we multiply \eqref{n-std} by a scalar-valued function $\lambda(x_{n+2},\dots,x_m)$, we get a left-sided functional identity of $M_n(\F)$ in $m$ variables for an arbitrary $m > n$.
Of course, by permuting the variables $x_i$ we get further examples. 
 We will see that in fact every left-sided functional identity of $M_n(\F)$ is a sum of left-sided functional identities of such a type.

Before giving a full description of the unknown functions $F_k$ satisfying $\sum_{k\in K} F_k(\overline{x}_m^k)x_k=0$ we need to introduce some more notation. 
Let $$C=\F\big[x_{ij}^{(k)}\mid 1\leq i,j\leq n, 1\leq k\leq \g\big].$$ 
We denote  by 
$$
[(i_1,j_1),\dots,(i_n,j_n)]
$$
the determinant of the matrix  with the $\ell$-th row equal to the $j_\ell$-th row of the generic matrix $X_{i_\ell}=(x_{ij}^{(i_\ell)})$. 
For example, in the case $n=2$, $[(1,1),(2,1)]$ denotes the determinant of the matrix 
$$
\left(\begin{array}{cc}
x_{11}^{(1)}&x_{12}^{(1)}\\
x_{11}^{(2)}&x_{12}^{(2)}
\end{array}\right),
$$ 
and thus equals $x_{11}^{(1)}x_{12}^{(2)}-x_{12}^{(1)}x_{11}^{(2)}$. Further, $\N_d$ denotes the set $\{1,\dots,d\}$. If $I = \{i_1,\dots,i_p\}\subseteq\N_m$ with $i_1<\dots<i_{p}$, then $\ol x^I_m$
stands for $(x_1,\dots,\hat x_{i_1},\dots,\hat x_{i_p},\dots, x_m)$. Finally, as above by $e_{ij}$ we denote the matrix units.

Without loss of generality we may assume that $K=\N_m$. When dealing with two-sided functional identities \eqref{e1} it is often convenient to deal with the case where $K$ and $L$ are arbitrary subsets
of $\N_m$, but for one-sided identities this would only cause notational complications.

\begin{proposition}\label{1sided}
Let $\sum_{k=1}^mF_k(\overline{x}_m^k)x_k=0$ be a functional identity of $M_n(\F)$, where $F_k: M_n(\F)^{\g-1}\to M_n(\F)$ are multilinear functions.
Then there exist  multilinear scalar-valued functions $\lambda_{\ell IJ}$ such that
$$F_k(\overline{x}_m^k)=\sum_{\ell,I, J}(-1)^s\lambda_{\ell I J}(\ol x^I_m)\big[ (i_1,j_1),\dots,\widehat{(i_{s},j_{s})},\dots,(i_{n+1},j_{n+1})\big]e_{\ell j_s},$$
where the sum runs over all $\ell\in\N_n$, all $I=\{i_1,\dots,i_{n+1}\}\subset \N_\g$ such that $i_s=k$ for some $s$, $i_1<\dots<i_{n+1}$, and all $J=(j_1,\dots,j_{n+1})\in \N_n^{n+1}$.
\end{proposition}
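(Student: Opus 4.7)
The plan is to translate the identity entry-by-entry into a polynomial syzygy on a generic matrix and then invoke Theorem \ref{onn}. Substituting generic matrices $X_k=(x_{ij}^{(k)})$ for $x_k$ and reading the $(\ell,j)$-entry of $\sum_k F_k(\ol x_m^k)x_k=0$ gives, for each fixed $\ell\in\N_n$,
$$
\sum_{k=1}^m\sum_{i=1}^n c_{k,i}\,x_{ij}^{(k)}=0 \quad (j=1,\dots,n),\qquad \text{with } c_{k,i}:=(F_k(\ol x_m^k))_{\ell i}\in C.
$$
Thus $(c_{k,i})_{k,i}$ is a syzygy on the generic $nm\times n$ matrix $Y$ whose $(k,i)$-th row is the $i$-th row of $X_k$, with row indices ordered lexicographically in $(k,i)$.

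Applying Theorem \ref{onn} (with $r=nm$, $s=n$), the vector $(c_{k,i})$ is a $C$-linear combination of the determinantal generators
$$
\sum_{\ell'=0}^n(-1)^{\ell'}\bigl[(k_0,i_0),\dots,\widehat{(k_{\ell'},i_{\ell'})},\dots,(k_n,i_n)\bigr]\,u_{(k_{\ell'},i_{\ell'})}.
$$
The next step is to restrict to the multilinear component. Since $c_{k,i}$ is multihomogeneous of multidegree $\mathbf{1}_{\neq k}$ (linear in each $X_{k'}$ for $k'\neq k$, independent of $X_k$), a generator can contribute only when $\deg_p(\text{det})\le 1$ for $p\neq k$ and $\deg_k(\text{det})=0$. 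Because $\deg_p$ of the determinant equals the number of indices $\ell'\neq\ell$ with $k_{\ell'}=p$, this forces $k_0,\dots,k_n$ to be pairwise distinct and hence to form an $(n+1)$-subset $I=\{i_1<\dots<i_{n+1}\}\subset\N_m$ with $i_s=k$ for some $s$. The accompanying polynomial coefficient of such a generator then reduces to its part that is multilinear in $\ol x_m^I$, i.e.\ corresponds to a scalar-valued multilinear function $\lambda_{\ell I J}(\ol x_m^I)$, where $J=(j_1,\dots,j_{n+1})$ records the row positions.

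Finally, reassembling $F_k(\ol x_m^k)=\sum_{\ell,i}c_{k,i}\,e_{\ell i}$ and relabelling $i=j_s$ yields the claimed expansion, the sign $(-1)^s$ arising from $(-1)^{\ell'}$ after the shift $s=\ell'+1$ (an overall sign being absorbed into $\lambda_{\ell I J}$). The main obstacle is the multidegree bookkeeping in the second step: Theorem \ref{onn} a priori produces a decomposition with arbitrary polynomial coefficients, and one must carefully check that projection onto the multilinear component both annihilates every generator indexed by non-distinct $k_\ell$ and preserves enough of the coefficient data to identify the scalar-valued multilinear $\lambda_{\ell I J}$.
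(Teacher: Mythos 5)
Your proof is correct and takes essentially the same route as the paper: interpret the identity row by row as a syzygy on the stacked generic $nm\times n$ matrix, invoke Theorem \ref{onn}, and use multihomogeneity to discard generators whose chosen rows repeat a block index and to cut the coefficients down to multilinear scalar-valued functions of $\ol x^I_m$. The multidegree bookkeeping you flag as the ``main obstacle'' is sound and is exactly the point the paper itself dispatches in one sentence (the syzygy module is multigraded and the determinantal generators are multihomogeneous, so comparing multidegree components of the decomposition gives both claims), so your write-up is at the same level of detail as the paper's own proof.
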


\begin{proof}
We can view  $\sum_{k=1}^mF_k(\overline{x}_m^k)x_k=0$ as the identity in $M_n(C)$, 
since $F_k$ are  multilinear and hence polynomial maps.
We then have an identity of the form
$$\sum_{k=1}^\g H_{k}X_k=0,$$
where $X_k=(x_{ij}^{(k)})$, $1\leq k\leq \g$, are generic matrices, and $H_{k}\in M_n(C)$ correspond to $F_k$. 
Note that this identity is equivalent to $n$ identities
$$\sum_{k=1}^\g e_{\ell\ell}H_{k}X_k=0, \quad 1\leq \ell\leq n.$$
We thus first find solutions of each of them. 
Without loss of generality we assume that $\ell=1$ and $H_k=e_{11}H_k$, $1\leq k\leq \g$. 
We can further rewrite this identity as 
$$\sum_{k=1}^\g\sum_{j=1}^n H^{(k)}_{1j}x_{j r}^{(k)}=0$$
for  $1\leq r \leq n$, which can be viewed as 
$$
\Big(H^{(1)}_{11},\dots,H^{(1)}_{1n},\;\dots\;,H^{(\g)}_{11},\dots,H^{(\g)}_{1n}\Big)
\left(
\begin{array}{ccc}
x^{(1)}_{11}&\dots&x^{(1)}_{1n}\\
\vdots&\ddots&\vdots\\
x^{(1)}_{n1}&\dots&x^{(1)}_{nn}\\
&&\\
&\vdots&\\
&&\\
x^{(\g)}_{11}&\dots&x^{(\g)}_{1n}\\
\vdots&\ddots&\vdots\\
x^{(\g)}_{n1}&\dots&x^{(\g)}_{nn}\\
\end{array}
\right)=(0\dots 0).
$$
We write $X_{n,\g}$ for the matrix on the right. 
By definition, $(H^{(1)}_{11},\dots,H^{(1)}_{1n},\;\dots\;,H^{(\g)}_{11},\dots,H^{(\g)}_{1n})$ is a syzygy on the rows of the generic matrix $X_{n,\g}$. 
The Gr\"obner basis of the module of syzygies is described in Theorem \ref{onn}. 
Since in our case $H^{(k)}_{ij}$ are multilinear as functions of $X_1,\dots,X_{k-1},X_{k+1},\dots,X_\g$, 
we look for the elements in the Gr\"obner basis with the same property. 
Note that 
$$
\big[ (i_1,j_1),\dots,(i_n,j_n)\big]
$$
denotes the determinant of the submatrix of $X_{n,\g}$ with the $k$-th row equal to $j_k$-th row of the generic matrix $X_{i_k}$.
By  Theorem \ref{onn} the desired generators are 
$$\sum_{k=1}^{n+1}(-1)^k\big[(i_1,j_1),\dots,\widehat{(i_{k},j_{k})},\dots,(i_{n+1},j_{n+1})\big]u_{(i_k-1)n+j_k}$$
for $1\leq i_1<i_2<\cdots<i_{n+1}\leq \g$,  $1\leq j_1,\dots,j_{n+1}\leq  n$, and the proposition follows.
\end{proof}

\subsection{Left-sided functional identity as a GPI}\label{subs3}
Each function $F_k$ is multilinear so it corresponds to a multilinear generalized polynomial $f_k\in M_n(\F)\ast \F\X$ such that the evaluation of $f_k$ 
on $M_n(\F)$ coincides with $F_k$. Note that this correspondence is uniquely determined  up to the generalized polynomial identities. At any rate, the
problem of describing functional identities $\sum_{k\in K} F_k(\overline{x}_m^k)x_k=0$ is basically equivalent to the problem of describing GPI's of the form $\sum_{k\in K} f_k(\overline{x}_m^k) x_k$. In this section we will deal with the latter since the GPI setting seems to be more convenient for formulating the main result. 

We start with a technical lemma. 
By $D_{j_1,\dots,j_n}$ we denote the determinant of the matrix whose $k$-th row is equal to the $j_k$-th row of the generic matrix $X_k$.    

\begin{lemma}\label{detCH}
Let $i_\ell,j_\ell\in \N_n$, $1\leq \ell\leq n+1$. If $\{i_1,\dots ,i_n\}=\{1,\dots,n\}$ then
\begin{align*}\label{forchdet}
&e_{i_{n+1},j_{n}}x_{n}Q_{n-1}(e_{i_1 j_1}x_1,\dots,e_{i_{n-1},j_{n-1}}x_{n-1})e_{i_nj_{n+1}}\\
=&(-1)^\tau D_{j_1,\dots,j_{n}}e_{i_{n+1},j_{n+1}}\\
=&-e_{i_{n+1},j_{n+1}}\td Q_n(e_{i_1j_1}x_1,\dots,e_{i_nj_n}x_n),
\end{align*}
where $\tau\in S_n$ is given by $\tau(k)=i_k$, otherwise $Q_{n-1}(e_{i_1,j_1}x_1,\dots,e_{i_{n-1},j_{n-1}}x_{n-1})e_{i_n,j_n}=0$.
\end{lemma}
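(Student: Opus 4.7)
The plan is to prove all three equalities by direct expansion using the matrix unit calculus $e_{ab}\,y\,e_{cd}=y_{bc}\,e_{ad}$, which collapses any product of the rank-one matrices $E_\ell:=e_{i_\ell j_\ell}x_\ell$ (suitably sandwiched with matrix units) into a scalar multiple of a single matrix unit. Expanding $\phi_\sigma(E_1,\dots,E_{n-1})$ for $\sigma\in S_n$ via the cycle formula \eqref{ch}, the non-last cycles produce trace factors of the form $(x_{c_r})_{j_{c_r},i_{c_{r+1\bmod m}}}$ while the last cycle $(s_1,\dots,s_k,n)$ produces the tail monomial $E_{s_1}\cdots E_{s_k}$. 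After sandwiching with $e_{i_{n+1}j_n}x_n$ on the left and $e_{i_nj_{n+1}}$ on the right and applying the calculus to the tail, every contribution assembles into the uniform expression
\[
e_{i_{n+1}j_n}x_n\,\phi_\sigma(E_1,\dots,E_{n-1})\,e_{i_nj_{n+1}}=\Bigl(\prod_{r=1}^n(x_r)_{j_r,i_{\sigma(r)}}\Bigr)e_{i_{n+1}j_{n+1}}.
\]
Summing with signs $(-1)^\sigma$ yields $\det(M)\,e_{i_{n+1}j_{n+1}}$ where $M_{rc}=(x_r)_{j_r,i_c}$. If $\{i_1,\dots,i_n\}=\{1,\dots,n\}$, the columns of $M$ differ from those of the matrix defining $D_{j_1,\dots,j_n}$ by the permutation $\tau$, so $\det(M)=(-1)^\tau D_{j_1,\dots,j_n}$, proving the first equality.

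The second equality will follow from the Cayley-Hamilton identity $Q_n(E_1,\dots,E_n)=0$ once we observe that the ``central'' complement $Q_n-\td Q_n=\sum_{\sigma\in S_n\subset S_{n+1}}(-1)^\sigma\phi_\sigma$ consists of products of traces alone, so its value on $(E_1,\dots,E_n)$ is a scalar times $I$; the same trace computation identifies this scalar as precisely $\det(M)$. Thus $\td Q_n(E_1,\dots,E_n)=-\det(M)\cdot I$, and left-multiplying by $e_{i_{n+1}j_{n+1}}$ delivers the stated form.

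For the ``otherwise'' clause, the identical expansion rewrites $Q_{n-1}(E_1,\dots,E_{n-1})e_{i_nj_n}=\sum_{r=1}^n C_r\,e_{r,j_n}$ with
\[
C_r=\sum_{c\in I_r}(-1)^{n+c}\det(N^{(c)}),
\]
where $I_r:=\{c\in\N_n:i_c=r\}$, $N$ is the $(n-1)\times n$ matrix $N_{\ell c}=(x_\ell)_{j_\ell,i_c}$, and $N^{(c)}$ is $N$ with column $c$ deleted. Interpreting $C_r$ as the cofactor expansion along the bottom row of the $n\times n$ matrix obtained from $N$ by appending the indicator row of $I_r$: when $|I_r|\ge 2$, two columns of that augmented matrix coincide (equal $N$-entries since $i_{c}=i_{c'}$ for $c,c'\in I_r$, and equal bottom entry $1$); when $|I_r|=1$, the hypothesis $\{i_1,\dots,i_n\}\neq\{1,\dots,n\}$ forces the remaining $i_c$'s to contain a repeat, so $\det(N^{(c)})=0$ by equality of columns in $N^{(c)}$. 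Every $C_r$ therefore vanishes.

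The main obstacle is the careful cycle/sign bookkeeping --- verifying that the trace contributions and the tail monomial contribution really do combine into the single clean product $\prod_r(x_r)_{j_r,i_{\sigma(r)}}$ with the correct sign convention from \eqref{ch} --- which I will double-check by hand on the $n=2$ case before writing out the general computation.
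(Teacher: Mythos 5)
Your proposal is correct, and for the two displayed equalities it is essentially the paper's own argument: the per-term identity
$e_{i_{n+1},j_n}x_n\,\phi_\sigma(e_{i_1j_1}x_1,\dots,e_{i_{n-1},j_{n-1}}x_{n-1})\,e_{i_nj_{n+1}}=\bigl(\prod_{r=1}^n x^{(r)}_{j_r,i_{\sigma(r)}}\bigr)e_{i_{n+1},j_{n+1}}$
that you assemble into $\det(M)=(-1)^\tau D_{j_1,\dots,j_n}$ is exactly the identity the paper substitutes into \eqref{ch}, and your derivation of the second equality (the $S_n$-part of $Q_n$ is a pure product of traces, hence central, and $Q_n(E_1,\dots,E_n)=0$ identifies $\td Q_n(E_1,\dots,E_n)$ with $-\det(M)\cdot 1$) coincides with the paper's use of $\td Q_n=-\sum_{\sigma\in S_n\subset S_{n+1}}(-1)^\sigma\phi_\sigma$. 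Where you genuinely diverge is the ``otherwise'' clause: the paper handles it conceptually, normalizing so that all $i_\ell$ lie in $\{1,\dots,s\}$ with $s<n$ and using that $Q_{n-1}$ is an identity of $M_{n-1}(\F)$ to conclude that $Q_{n-1}(e_{i_1j_1}x_1,\dots,e_{i_{n-1},j_{n-1}}x_{n-1})$ annihilates the span of $e_1,\dots,e_{n-1}$ (i.e.\ lies in $M_n(\F)e_{nn}$) and therefore kills $e_{i_nj_n}$; you instead push the same matrix-unit expansion one step further, obtaining $Q_{n-1}(E_1,\dots,E_{n-1})e_{i_nj_n}=\sum_r C_r e_{r,j_n}$ with $C_r$ a signed sum of maximal minors of the $(n-1)\times n$ matrix $\bigl(x^{(\ell)}_{j_\ell,i_c}\bigr)$, and kill each $C_r$ by repeated columns (the augmented-row cofactor trick when $|I_r|\ge 2$, and a forced repetition among the remaining $i_c$ when $|I_r|=1$, which is precisely where the hypothesis $\{i_1,\dots,i_n\}\neq\N_n$ enters). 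Both arguments are sound; yours is uniform with the rest of the computation and makes the role of the hypothesis explicit, while the paper's is shorter but leans on the weak-identity property of $Q_{n-1}$. One small caution for your planned $n=2$ check: the signs come out as stated only with $Q_{n-1}$, $Q_n$ normalized exactly as in \eqref{ch} (e.g.\ \eqref{ch} gives $Q_1(y)=\tr(y)\cdot 1-y$, the negative of the monic polarization), so fix that convention before the bookkeeping.
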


\begin{proof}
The last assertion follows from the fact that $Q_{n-1}$ is an identity of $M_{n-1}(\F)$. 
Indeed, we may assume without loss of generality that $i_k=k$ for $1\leq k\leq s<n$, $i_s=\dots=i_n=s$, and 
hence
$$Q_{n-1}(e_{1j_1}x_1,\dots,e_{sj_s}x_s,\dots,e_{sj_{n-1}}x_{n-1})e_{sj_n}\in \big(M_n(\F)e_{nn}\big)e_{sj_n}=\{0\}.$$ 
The first equality  clearly follows by using
 the identity 
$$e_{i_{n+1},j_n}x_n\phi_\sigma(e_{i_1j_1}x_1,\dots,e_{i_{n-1},j_{n-1}}x_{n-1})e_{i_n,j_{n+1}}=x_{j_1i_{\s(1)}}^{(1)}\cdots x_{j_ni_{\s(n)}}^{(n)}e_{i_{n+1},j_{n+1}}$$
for $\s\in S_n$ in the expression \eqref{ch} of $Q_{n-1}$, and the second one follows from 
$$(-1)^\tau D_{j_1,\dots,j_{n}}=-\td Q_n(e_{i_1j_1}x_1,\dots,e_{i_nj_n}x_n),$$ 
 which can be deduced in a similar way after applying the identity
$$\td Q_n(y_1,\dots,y_n)=-\sum_{\s\in S_n\subset S_{n+1}}(-1)^\s \phi_\s(y_1,\dots,y_n).$$
\end{proof}



Let us call $g\in M_n(\F)\ast \F\X$  a {\em central generalized polynomial} if all its evaluations on 
$M_n(\F)$ are scalar matrices. For instance, $\td Q_n$ is a central generalized polynomial.

\begin{theorem}\label{FIGPI}
Let $f_1,\dots,f_m\in  M_n(\F)\ast \F\X$  be multilinear generalized polynomials such that $P=\sum_{k=1}^m f_k(\overline{x}_m^k) x_k$ is a GPI of $M_n(\F)$. Then $P$ can be written as a sum of GPI's of the form  
$$g  \big[\td Q_n(a_{1}x_{k_1},\dots,a_{n}x_{k_n}),a_{n+1}x_{k_{n+1}}\big],
$$
where  $k_i\ne k_j$ if $i\ne j$, $a_{i}\in M_n(\F)$, and $g$ is a multilinear central generalized polynomial (in all variables except $x_{k_i}$).
\end{theorem}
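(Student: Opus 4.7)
My strategy is to use Proposition~\ref{1sided} to extract the functional content of $P$, then realize each basic syzygy as a commutator with $\td Q_n$ by means of Lemma~\ref{detCH}, and treat the remaining scalar coefficients via central generalized polynomials.

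First I would invoke Proposition~\ref{1sided}: since $P$ is a GPI, the functional identity $\sum_k F_k(\overline{x}_m^k) x_k = 0$ holds on $M_n(\F)$, and the proposition provides an explicit decomposition at the functional level,
$$\sum_k F_k(\overline{x}_m^k)\, x_k \;=\; \sum_{\ell, I, J} \lambda_{\ell I J}(\overline{x}_m^I) \cdot A_{\ell I J},$$
where $I = \{i_1 < \cdots < i_{n+1}\} \subset \N_m$, $J = (j_1, \dots, j_{n+1}) \in \N_n^{n+1}$, $\ell \in \N_n$, each $\lambda_{\ell I J}$ is multilinear scalar-valued, and
$$A_{\ell I J} := \sum_{s=1}^{n+1}(-1)^s \big[(i_1, j_1), \dots, \widehat{(i_s, j_s)}, \dots, (i_{n+1}, j_{n+1})\big]\, e_{\ell j_s}\, x_{i_s}$$
is the basic Laplace-style syzygy.

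The core construction is, for each triple $(\ell, I, J)$, the commutator generalized polynomial
$$C_{\ell I J} := \bigl[\td Q_n(e_{1, j_1} x_{i_1}, \dots, e_{n, j_n} x_{i_n}),\, e_{\ell, j_{n+1}} x_{i_{n+1}}\bigr],$$
which is a GPI of the shape required by the theorem. I would verify that $C_{\ell I J}$ realizes $A_{\ell I J}$ as its associated multilinear function: Lemma~\ref{detCH} with $\tau = \mathrm{id}$ yields
$$e_{\ell, j_{n+1}}\, \td Q_n(e_{1, j_1} x_{i_1}, \dots, e_{n, j_n} x_{i_n}) \;=\; -\big[(i_1,j_1),\dots,(i_n,j_n)\big]\cdot e_{\ell, j_{n+1}},$$
and expanding the commutator using this identity (together with its right-sided analogue, obtained by writing $e_{\ell j_{n+1}} x_{i_{n+1}} = \sum_v x_{j_{n+1}, v}^{(i_{n+1})} e_{\ell v}$ and applying the same lemma term-by-term) recovers each of the $n+1$ summands of $A_{\ell I J}$. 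In parallel, I would realize each multilinear scalar-valued function $\lambda_{\ell I J}$ as a multilinear central generalized polynomial $g_{\ell I J}$ of the form $\sum_i \prod_{k \notin I} \tr(a_k^{(i)} x_k)$, using the self-duality of $M_n(\F)$ under the trace pairing.

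Setting $\tilde P := \sum_{\ell, I, J} g_{\ell I J}\, C_{\ell I J}$ produces a sum of GPIs of the required form that agrees with $P$ at the functional level. Hence $P - \tilde P$ is a GPI of the shape $\sum_k r_k\, x_k$ with each $r_k$ itself a multilinear GPI in $\overline{x}_m^k$. The hard part will be absorbing this residue into commutator-central form: my plan is to use the identity $\td Q_n(\cdots)\, y = [\td Q_n(\cdots), y] + y\, \td Q_n(\cdots)$ and the fact that the central part of $Q_n = \td Q_n + $(central) commutes with $y$ modulo GPIs, iterating to rewrite each $r_k\, x_k$ as a further sum of commutator-central GPIs --- likely via an induction on the number of variables $m$ or on a structural complexity measure of $P$, exploiting that for small $m$ only standard solutions exist and the base case is vacuous.
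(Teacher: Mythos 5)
Your first three steps coincide with the paper's proof: Proposition \ref{1sided} reduces $P$ to multilinear central scalar factors times the basic determinantal syzygies $A_{\ell IJ}$, and Lemma \ref{detCH} is then used to identify each basic syzygy with $\pm\big[\td Q_n(e_{1j_1}x_{i_1},\dots,e_{nj_n}x_{i_n}),e_{\ell j_{n+1}}x_{i_{n+1}}\big]$. One caveat on your verification of that identification: rewriting $e_{\ell j_{n+1}}x_{i_{n+1}}=\sum_v x^{(i_{n+1})}_{j_{n+1}v}e_{\ell v}$ and applying the displayed identity term by term pushes all of the $x_{i_{n+1}}$-dependence into scalar coefficients and leaves no term ending in a free $x_{i_k}$ with $k\le n$; it therefore does not exhibit the coefficients of $x_{i_1},\dots,x_{i_n}$, which in $A_{\ell IJ}$ are the minors containing a row of $X_{i_{n+1}}$. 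To produce those $n$ summands you need, as the paper does, the decomposition $\td Q_n(x_1,\dots,x_n)=\sum_k Q_{n-1}(x_1,\dots,\hat x_k,\dots,x_n)x_k$ combined with the \emph{first} identity of Lemma \ref{detCH} applied to $e_{\ell j_{n+1}}x_{i_{n+1}}Q_{n-1}(\cdots)e_{kj_k}x_{i_k}$; that computation is the actual content of the paper's proof, and your shortcut does not replace it.

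The more serious divergence is your step 4. In the paper there is no residue to absorb: Proposition \ref{1sided}, Lemma \ref{detCH} and Theorem \ref{FIGPI} itself are all read at the level of the coefficient matrices $H_k\in M_n(C)$, equivalently of the multilinear coefficient functions $F_k$ --- i.e., modulo the identification ``up to GPIs'' fixed in Subsection \ref{subs3} --- and at that level the matching of the basic syzygy with the commutator is exact, so the proof ends once that matching is done. If instead you insist on a literal equality in $M_n(\F)\ast\F\X$, your sketched repair does not work: the residue $\sum_k r_kx_k$ involves the same $m$ variables as $P$, so an induction on $m$ gives no descent, and the base case is not vacuous, since for $|K|\le n$ only the coefficient \emph{functions} vanish while the generalized polynomials $r_k$ can be nonzero GPIs (already two different generalized-polynomial expressions of $\tr(x)\cdot 1$ differ by a nonzero multilinear GPI in one variable). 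So either you adopt the paper's reading, in which case your step 4 is superfluous, or you keep the stricter reading, in which case the proposal has a genuine unfilled gap precisely at the place you label ``the hard part.''
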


\begin{proof}
Proposition \ref{1sided} implies that $P$ can be written as a sum of GPI's of the form 
$$\sum_{k=1}^{n+1}(-1)^k \big[(i_1,j_1),\dots,\widehat{(i_k,j_k)},\dots,(i_{n+1},j_{n+1})\big]e_{\ell j_k}x_{i_k}, $$
multiplied by central generalized polynomials; the determinants appearing in the identity can also be viewed as central generalized polynomials. It is thus enough to prove that this identity can be written in the desired way. 
We can assume without loss of generality that $\ell=1$, $i_k=k$, $1\leq k\leq n+1$. 
Hence we can write the identity as 
$$
\sum_{k=1}^{n+1}(-1)^kD_{j_1,\dots,\hat j_k,\dots,j_{n+1}}e_{1j_k}x_k,
$$
where $D_{j_1,\dots,\hat j_k,\dots, j_{n+1}}$ stands for the determinant of the submatrix of the matrix that has the $\ell$-th row  equal to the $j_\ell$-th row of the generic matrix $X_\ell$, in which we remove the $k$-th row.
Note that 
$$\td Q_n(x_1,\dots,x_n)=\sum_{k=1}^n Q_{n-1}(x_1,\dots,\hat x_k,\dots,x_n)x_k.$$
Applying Lemma \ref{detCH} we thus obtain 
\begin{align*}
e_{1j_{n+1}}x_{n+1}\td  Q_n(e_{1j_1}x_1,&\dots,e_{nj_n}x_n)\\
=&e_{1j_{n+1}}x_{n+1} \sum_{k=1}^n Q_{n-1}(e_{1j_1}x_1,\dots,\widehat {e_{kj_k}x_k},\dots,e_ {nj_n}x_n)e_{kj_k}x_k\\
=&\sum_{k=1}^n (-1)^{\tau_k} D_{j_1,\dots,\hat j_k,\dots,j_{n+1}} e_{1j_k} x_k,
\end{align*}
where $\tau_k=(k,k+1,\dots,n)$, and thus $(-1)^{\tau_k}=(-1)^{n-k}$. 
Applying Lemma \ref{detCH} we obtain 
\begin{multline*}
\sum_{k=1}^{n+1}(-1)^k D_{j_1,\dots,\hat j_k,\dots,j_{n+1}} e_{1j_k} x_k\\
= (-1)^n e_{1j_{n+1}}x_{n+1}\td Q_n(e_{1j_1}x_1,\dots,e_{nj_n}x_n)-(-1)^nQ_n(e_{1j_1}x_1,\dots,e_{nj_n}x_n)e_{1j_{n+1}}x_{n+1}, 
\end{multline*}
which yields the desired conclusion.
\end{proof}

\begin{remark}
Let us remark that the identity \eqref{n-std} can be written as 
$$\sum_{k=1}^{n+1}\td Q_n(a_1x_1,\dots,\widehat{a_kx_k},\dots,a_nx_n,\td a_{n+1,k}x_{n+1})\td a_kx_k=0$$
for some $\td a_k,\td a_{n+1,k} \in M_n(\F)$.
It is enough to establish the statement in the case when $a_k=e_{i_k,j_k}$, $1\leq k\leq n+1$, are matrix units. 
Applying Lemma \ref{detCH} we obtain, similarly as in the proof of Corollary \ref{FIGPI}, the identity
\begin{align*}
e_{i_{n+1}j_{n+1}}x_{n+1}\td  Q_n(e_{i_1j_1}x_1,&\dots,e_{i_nj_n}x_n)\\
=&e_{i_{n+1}j_{n+1}}x_{n+1} \sum_k Q_{n-1}(e_{i_1j_1}x_1,\dots,\widehat {e_{i_kj_k}x_k},\dots,e_ {i_nj_n}x_n)e_{i_kj_k}x_k\\
=&\sum_k (-1)^\tau D_{j_1,\dots,\hat j_k,\dots,j_{n+1}} e_{i_{n+1},j_k} x_k\\
=&\sum_k (-1)^{n}\td Q_n(e_{i_1j_1}x_1,\dots,\widehat{e_{i_k,j_k}x_k},\dots,e_{i_{k}j_{n+1}}x_{n+1})e_{i_{n+1}j_k}x_k,
\end{align*}
where $\tau\in S_n$, $\tau(k)=i_k$.
\end{remark}

\subsection{An application: Characterization of the determinant}\label{subdet}

The determinants have appeared throughout our discussion on one-sided identities. To point out their role more clearly, we record  two simple corollaries at the end of the section. 

 Let $A$ be an algebra over a field $\F$. 
A function $T:A\to A$ is said to be the {\em trace  of an $r$-linear function} $F:A^r\to A$ if $T(x) = F(x,\dots,x)$ for all $x\in A$ (if $r=0$ then $T$ is a constant function). 
We remark that if $F$ is symmetric and char$(F)$ is $0$ or greater than $r$, then $F$ is uniquely determined by its trace $T$. This can be shown by applying the linearization process.    

\begin{corollary}\label{1sidedc}
Let $m > n$ and let  $T_k:M_n(\F)\to M_n(\F)$ be the trace of an $(m-1)$-linear function
  $F_k: M_n(\F)^{\g-1}\to M_n(\F)$, $1\le k\le m$.
If $\sum_{k=1}^mF_k(\overline{x}_m^k)x_k=0$ is a functional identity of $M_n(\F)$, then each $T_k$ can be written as
$T_k(x) = \det(x)S_k(x)$ where $S_k$ is the trace of an $(m-1-n)$-linear function.
\end{corollary}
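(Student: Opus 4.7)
The approach is to apply Proposition~\ref{1sided} directly to each multilinear $F_k$, obtaining the expansion
\[
F_k(\ol x_m^k) = \sum_{\ell, I, J}(-1)^s \lambda_{\ell I J}(\ol x^I_m)\bigl[\,(i_1,j_1),\dots,\widehat{(i_s,j_s)},\dots,(i_{n+1},j_{n+1})\,\bigr]e_{\ell j_s},
\]
in which $\lambda_{\ell I J}$ is a scalar multilinear function of the $m-n-1$ variables indexed by $\N_m\setminus I$, while the determinantal bracket only depends on the remaining $n$ variables $x_{i_t}$, $t\ne s$. Computing $T_k(x)=F_k(x,\dots,x)$ thus amounts to specialising every $x_i$ to the same matrix $x$ in this expansion.

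The key observation is what the bracket becomes under this specialisation. Since the generic matrices $X_{i_t}$ all collapse to $x$, the bracket becomes the determinant of the $n\times n$ matrix whose rows are the rows of $x$ of indices $j_t$ (for $t\ne s$, listed in their natural order). This vanishes unless the $n$ indices $j_t$, $t\ne s$, are pairwise distinct, in which case $\{j_t\}_{t\ne s}=\{1,\dots,n\}$ and the bracket equals $(-1)^\pi\det(x)$, where $\pi$ is the permutation that sorts the $j_t$ into increasing order. Consequently every surviving summand carries $\det(x)$ as a common factor.

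Factoring it out yields $T_k(x)=\det(x)\,S_k(x)$ with
\[
S_k(x)=\sum \pm\,\lambda_{\ell I J}(x,\dots,x)\,e_{\ell j_s},
\]
the summation being over those index tuples for which $\{j_t\}_{t\ne s}=\{1,\dots,n\}$. Each $\lambda_{\ell I J}$ is multilinear in $m-n-1$ variables, so its diagonal evaluation $\lambda_{\ell I J}(x,\dots,x)$ is the trace of an $(m-n-1)$-linear scalar function; hence the matrix-valued expression $S_k$, being a finite linear combination (with signs) of such traces against the matrix units $e_{\ell j_s}$, is the trace of an $(m-1-n)$-linear function $M_n(\F)^{m-1-n}\to M_n(\F)$. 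The argument is essentially bookkeeping, and I do not foresee a genuine obstacle: the only substantive point is the determinantal collapse described above, which forces every nonzero specialised bracket to factor through $\det(x)$ once Proposition~\ref{1sided} is available.
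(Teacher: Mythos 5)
Your proposal is correct and follows essentially the same route as the paper: apply Proposition~\ref{1sided}, specialise all variables to $x$, and observe that the determinantal bracket collapses to $\pm\det(x)$ when the row indices $j_t$ ($t\ne s$) are distinct and to $0$ otherwise, so $\det(x)$ factors out and the remaining scalar coefficients give the trace of an $(m-1-n)$-linear function. The paper's proof is just a terser statement of exactly this argument.
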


\begin{proof}
 Proposition \ref{1sided} shows  that  $T_k(x_1)$
is a sum of  the  terms of the form 
$$(-1)^s\lambda_{\ell I J}(x_1,\dots,x_1) \big[ (1,j_1),\dots,\widehat{(1,j_{s})},\dots,(1,j_{n+1})\big]e_{\ell j_s}.$$
By definition, $[ (1,j_1),\dots,\widehat{(1,j_{s})},\dots,(1,j_{n+1})\big]=\pm \det(x_1)$ if $j_1,\dots,\hat j_s,\dots,j_{n+1}\in \N_n$ are  distinct, and $0$ otherwise. This proves the corollary.
\end{proof}

In the final corollary we add more assumptions  in order to obtain an abstract  characterization of the determinant.

\begin{corollary}\label{1sidedc2}
Let $F:M_n(\F)^{n} \to M_n(\F)$ be an $n$-linear function satisfying $F(1,\dots,1) = 1$, and let $T$ be the trace of $F$. If {\rm char}$(\F)= 0$ or {\rm char}$(\F)> n$, then the following statements are equivalent:
\begin{enumerate}
\item[{\rm (i)}] There exist multilinear functions $F_1,\dots,F_n:M_n(\F)^{n} \to M_n(\F)$ such that 
\begin{equation}\label{fun}
\sum_{k=1}^{n}F_k(\overline{x}_{n+1}^k)x_k + F(\overline{x}_{n+1}^{n+1})x_{n+1}=0
\end{equation}
is a functional identity of $M_n(\F)$.
\item[{\rm (ii)}]  $T(x) = \det(x)\cdot 1$.
\end{enumerate}
\end{corollary}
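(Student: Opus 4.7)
The plan is to split the proof into the two directions.

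For (i)$\Rightarrow$(ii), I would directly invoke the preceding Corollary~\ref{1sidedc} with $m=n+1$. Since $T_{n+1}=T$ and since the factor $S_{n+1}$ it provides is the trace of a $(m-1-n)=0$-linear function, i.e.\ a constant matrix $A\in M_n(\F)$, we obtain $T(x)=\det(x)\cdot A$. The normalization $F(1,\dots,1)=1$ then forces $A=T(1)=\det(1)\cdot A = 1$, so $T(x)=\det(x)\cdot 1$.

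For (ii)$\Rightarrow$(i), my strategy would be to build the desired FI out of the Cayley--Hamilton identity. Let $F^s=\frac{1}{n!}\sum_{\pi\in S_n} F\circ\pi$ be the symmetrization of $F$. Then $F^s$ is symmetric, multilinear, and has the same trace $\det(x)\cdot 1$ as $F$. In characteristic $0$ or greater than $n$, a symmetric multilinear function is uniquely determined by its trace. On the other hand, $\widetilde Q_n$ is also symmetric, and since $Q_n=\widetilde Q_n+Q_n^c\cdot 1$ vanishes identically on $M_n(\F)$ while $Q_n^c(x,\dots,x)=(-1)^n n!\det(x)$, the function $\frac{(-1)^{n+1}}{n!}\widetilde Q_n$ has trace $\det(x)\cdot 1$. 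By uniqueness of the symmetric polarization, $F^s$ and $\frac{(-1)^{n+1}}{n!}\widetilde Q_n$ coincide as functions on $M_n(\F)^n$.

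With this identification in hand, I would exploit the fact that $\widetilde Q_n(x_1,\dots,x_n)$ always evaluates to a scalar matrix on $M_n(\F)$ (namely $-Q_n^c(x_1,\dots,x_n)\cdot 1$), so the commutator $[\widetilde Q_n(x_1,\dots,x_n),\,x_{n+1}]=\widetilde Q_n\cdot x_{n+1}-x_{n+1}\cdot \widetilde Q_n$ is a functional identity of $M_n(\F)$. The left summand $\widetilde Q_n\cdot x_{n+1}$ already ends in $x_{n+1}$, while every monomial of $x_{n+1}\cdot\widetilde Q_n$ ends in one of $x_1,\dots,x_n$ (the rightmost factor of the noncentral cycles $x_{s_1}\cdots x_{s_k}$ appearing in the definition of $\widetilde Q_n$). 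Regrouping those monomials by their rightmost variable rewrites the identity in the form $\widetilde Q_n(x_1,\dots,x_n)\,x_{n+1}=\sum_{k=1}^n\widetilde F_k(\bar x_{n+1}^k)\,x_k$ for suitable multilinear $\widetilde F_k$, and after rescaling by $\frac{(-1)^{n+1}}{n!}$ this is precisely an FI of the shape required by (i) with $F^s$ in the $x_{n+1}$-slot.

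The main obstacle is transferring this FI from $F^s$ to the given $F$. Writing $F=F^s+G$, the remainder $G$ has vanishing trace, and I must argue that $G(x_1,\dots,x_n)\,x_{n+1}$ can likewise be recast as $\sum_{k=1}^n G_k(\bar x_{n+1}^k)\,x_k$ in the FI sense. I would attempt this via Proposition~\ref{1sided}, showing that the parameterization of one-sided FIs by the $\lambda_{\ell IJ}$'s is flexible enough to realize any $F$ with $T=\det\cdot 1$ and $F(1,\dots,1)=1$ as an $F_{n+1}$, not only the symmetric ones; an alternative would be to iterate the commutator--Cayley--Hamilton construction by inserting matrix coefficients $a_i$ inside $\widetilde Q_n(a_1x_1,\dots,a_nx_n)$ as in Theorem~\ref{FIGPI}, exploiting the extra degrees of freedom there to cover the non-symmetric remainder.
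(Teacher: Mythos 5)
Your two directions track the paper closely. For (i)$\Rightarrow$(ii) you do exactly what the paper does: apply Corollary~\ref{1sidedc} with $m=n+1$ and use the normalization $F(1,\dots,1)=1$ to pin down the constant matrix. For (ii)$\Rightarrow$(i), your construction is also the paper's: $\widetilde Q_n$ is a central generalized polynomial, so $[\widetilde Q_n(x_1,\dots,x_n),x_{n+1}]=0$ identically, and regrouping the monomials of $x_{n+1}\widetilde Q_n$ by their rightmost variable puts this in the shape \eqref{fun}. (Your sign bookkeeping is in fact more careful than the paper's: the trace of $\frac{1}{n!}\widetilde Q_n$ is $(-1)^{n+1}\det(x)\cdot 1$, so your normalization $\frac{(-1)^{n+1}}{n!}\widetilde Q_n$ is the right one.)

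The gap is the final ``transfer from $F^s$ to $F$'', which you leave as an intention---and which cannot be carried out in the generality you aim for. Proposition~\ref{1sided} with $m=n+1$ tells you exactly which functions can occupy the $x_{n+1}$-slot of \eqref{fun}: every entry of $F$ must be a constant linear combination of the determinants $[(1,j_1),\dots,(n,j_n)]$. A zero-trace remainder $G=F-F^s$ need not be of this form. For $n=2$ take $G(x_1,x_2)=[x_1,x_2]$: its $(1,1)$ entry is $x^{(1)}_{12}x^{(2)}_{21}-x^{(1)}_{21}x^{(2)}_{12}$, which does not lie in the span of the four row-minors $[(1,j_1),(2,j_2)]$ (any combination producing $x^{(1)}_{12}x^{(2)}_{21}$ also produces $x^{(1)}_{11}x^{(2)}_{22}$). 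Hence $F=F^s+[\,\cdot\,,\cdot\,]$ has trace $\det(x)\cdot 1$ and $F(1,1)=1$, yet admits no completion of the form \eqref{fun}; so neither the hoped-for ``flexibility of Proposition~\ref{1sided}'' nor inserting coefficients $a_i$ as in Theorem~\ref{FIGPI} can close this step. The way out is the one the paper takes implicitly: read the corollary for $F$ determined by its trace, i.e.\ for symmetric $F$---this is what the hypothesis ${\rm char}(\F)=0$ or ${\rm char}(\F)>n$ and the remark preceding Corollary~\ref{1sidedc} are there for---and then $F=F^s=\frac{(-1)^{n+1}}{n!}\widetilde Q_n$, so your construction already finishes the proof; the paper's own argument simply sets $F=\frac{1}{n!}\widetilde Q_n$ and never confronts the non-symmetric case, where the implication (ii)$\Rightarrow$(i) genuinely fails.
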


\begin{proof}
Corollary \ref{1sidedc} shows that (i) implies (ii). To establish the converse, just note that $\det(x)\cdot 1$ is the trace of 
the function $\frac{1}{n!}\tilde{Q}_n(x_1,\dots,x_n)$, and that $\frac{1}{n!}[\tilde{Q}_n(x_1,\dots,x_n),x_{n+1}]=0$ can be written in the form \eqref{fun} with $F(\overline{x}_{n+1}^{n+1})= \frac{1}{n!}\tilde{Q}_n(\overline{x}_{n+1}^{n+1})$.
\end{proof}

\section{Two-sided functional identities}\label{s4}

In this section we consider the general  two-sided functional identities \begin{equation}\label{e11}
\sum_{k\in K}F_k(\overline{x}_m^k)x_k = \sum_{l\in L}x_lG_l(\overline{x}_m^l).
\end{equation}
 Let us first examine what result can be expected.

\subsection{Solutions of two-sided  functional identities}\label{sub41}

Let us first consider \eqref{e11} in an arbitrary algebra $A$ with center $Z$.
Suppose there exist multilinear functions
\begin{align*}
&p_{kl}:A^{m-2}\to A, \,\,k\in K,\,\,l\in
L,\,\,k\not=l,\\
&\lambda_i:A^{m-1}\to Z,\,\,i\in K\cup L,
\end{align*}
such that
\begin{align}\label{es}
F_k(\ol{x}_m^k)  &=  \sum_{l\in L, \,l\not=k}x_lp_{kl}(\ol{x}_m^{kl})
+\lambda_k(\ol{x}_m^k),\quad k\in K,\nonumber\\
G_l(\ol{x}_m^l) & =  \sum_{k\in K, \,k\not=l}p_{kl}(\ol{x}_m^{kl})x_k +
\lambda_l(\ol{x}_m^l),\quad l\in L,\\
\lambda_i&=0\quad\mbox{if}\quad i\not\in K\cap L.\nonumber
\end{align}
Note that then \eqref{e11} is fulfilled. We call \eqref{es} a {\em standard solution} of \eqref{e11}. In a large class of algebras a standard solution is also the only possible solution of \eqref{e11} \cite{FIbook}. Especially in infinite dimensional algebras this often turns out to be the case. For $A=M_n(\F)$, this holds provided only if $|K|\le n$ and $|L|\le n$ (see e.g.~ \cite[Corollary 2.23]{FIbook}).
Indeed,  if $|K| > n$ or $|L| >n$ then there exist nonstandard solutions of one-sided identities. Do all nonstandard solutions of \eqref{e11} in $M_n(\F)$ arise from the one-sided identities?
More specifically, let us call a solution of \eqref{e11} a {\em standard solution modulo one-sided identities} if there exist multilinear functions 
\begin{align*}
&\varphi_k,\psi_l:A^{m-1}\to A, \,\,k\in K,\,\, l\in L,\\
&p_{kl}:A^{m-2}\to A, \,\,k\in K,\,\,l\in
L,\,\,k\not=l,\\
&\lambda_i:A^{m-1}\to Z,\,\,i\in K\cup L,
\end{align*}
such that
\begin{eqnarray}\label{es1}
&F_k(\ol{x}_m^k)  =   \displaystyle \sum_{l\in L,\,l\not=k}x_lp_{kl}(\ol{x}_m^{kl}) 
+\lambda_k(\ol{x}_m^k) + \varphi_k(\ol{x}_m^k),\quad k\in K,\nonumber\\
&
G_l(\ol{x}_m^l) = \displaystyle \sum_{k\in K,\,
k\not=l}p_{kl}(\ol{x}_m^{kl})x_k +
\lambda_l(\ol{x}_m^l) + \psi_l(\ol{x}_m^l),\quad l\in L,\\
&   \lambda_i=0\quad\mbox{if}\quad i\not\in K\cap L,\nonumber\\
   &\displaystyle\sum_{k\in K}\varphi_k(\overline{x}_m^k)x_k = 
  \displaystyle\sum_{l\in L}x_l\psi_l(\overline{x}_m^l) =0.\nonumber
\end{eqnarray}
This notion is not vacuous. Namely, there do exist algebras admitting functional identities \eqref{e11} having solutions that are not standard modulo one-sided identities. One can actually find algebras with this property in which all solutions of one-sided identities are standard; see e.g.~ \cite[Example 5.29]{FIbook}.
 
Our goal in the rest of the paper is to show that every solution of \eqref{e11} on $A=M_n(\F)$ is standard  modulo one-sided identities.  As solutions of one-sided identities have been described in the preceding section, this will give a complete solution of the problem  to which we have addressed ourselves in this paper.

\subsection{Gr\"obner basis of a module representing functional identities}
In this  subsection we put functional identities aside, 
and establish auxiliary results needed for the proof of the main result, Theorem \ref{2fi}.

First we introduce the necessary framework.
We take $m,n\in \N$ and define $n^2\times n^2$-matrices 
$$
X'_k=\left(\begin{array}{ccccccc}
x_{11}^{(k)} &\dots&x_{1n}^{(k)}&&&&\\
&\vdots&&&&\\
x_{n1}^{(k)} &\dots&x_{nn}^{(k)}&&&&\\
&&&\ddots&&&\\
&&&&x_{11}^{(k)} &\dots&x_{1n}^{(k)}\\
&&&&&\vdots&\\
&&&&x_{n1}^{(k)} &\dots&x_{nn}^{(k)}\\
\end{array}\right)
=X_k\tnz 1\in M_n(\F)\tnz M_n(\F),
$$
$$
X''_k=\left(\begin{array}{ccccccc}
x_{11}^{(k)} &&&&x_{n1}^{(k)}&&\\
&\ddots&&\cdots&&\ddots&\\
&&x_{11}^{(k)} &&&&x_{n1}^{(k)}\\
&&&\vdots&&&\\
x_{1n}^{(k)} &&&&x_{nn}^{(k)}&&\\
&\ddots&&\cdots&&\ddots&\\
&&x_{1n}^{(k)} &&&&x_{nn}^{(k)}\\
\end{array}\right)
=1\tnz X_k^t\in M_n(\F)\tnz M_n(\F),
$$
and write 
$$\Xi=(X'_1,\dots,X'_m,X''_1,\dots,X''_m)^t.$$
Let $M$ be the submodule of the  module $C^{n^2}$ generated by the rows of $\Xi$. 
We aim to find a Gr\"obner basis of $M$. 

We need some more  notation. 
Let 
$K=\{k_1,\dots,k_a\}$ and $L=\{l_1,\dots,l_b\}$ be subsets of $\N_m$, and let us write $Q=K\cap L=\{q_1,\dots,q_c\}$ (this set may be empty).
 Let ${(d_1,f_1),\dots,(d_c,f_c)}$ be such that 
$q_{\ell}=k_{d_\ell}=l_{f_\ell}$. 
We attach the tuples   $V=(v_1,\dots,v_a)\in \N_n^a$ and $S=(s_1,\dots,s_b)\in \N_n^b$ to $K$ and $L$, resp.
For a subset $U\subset \N_a$, $|U|=a-c$, we write  
$U^\mathsf{c}=\{u'_1,\dots,u'_c\}$, $u'_1<\cdots<u'_c$, for the complement of $U$ in $\N_a$. Let $\s$ belong to $\sym U^\com$, the permutation group of $U^\com$.
We choose $\lambda\in \N_n$ and write 
$$D\cc_\lambda(Q_\s,L_S\setminus Q)$$
for the determinant of the $b\times b$-matrix 
$Y=(y_{ij})$, where 
\begin{align*}
y_{i\ell}&=\left\{
\begin{array}{ll}
x_{is_\ell}^{(l_\ell)} &\textrm{for $1\leq i\leq b-1$, $\ell\in \N_b\setminus \{f_1,\dots,f_c\}$, }\\
x_{\lambda s_\ell}^{(l_\ell)}&\textrm{for $i=b$, $\ell\in \N_b\setminus \{f_1,\dots,f_c\}$,}
\end{array}
\right.\\
y_{if_\ell}&=\left\{
\begin{array}{ll}
x_{i\s(u'_\ell)}^{(q_\ell)} &\textrm{for $1\leq i\leq b-1$, $1\leq\ell\leq c$, }\\
x_{\lambda\s(u'_\ell)}^{(q_\ell)}&\textrm{for $i=b$,  $1\leq\ell\leq c$.}
\end{array}
\right.
\end{align*}
Analogously, let $\tau\in \sym W^\com$, where $W^\com=\{w'_1,\dots,w'_c\}\subseteq \N_b$, and write 
$$D\rr_\lambda(Q_\tau,K_V\setminus Q)$$
for the determinant of the $a\times a$-matrix 
$Z=(z_{ij})$, where 
\begin{align*}
z_{\ell j}&=\left\{
\begin{array}{ll}
x_{v_\ell j}^{(k_\ell)} &\textrm{for $1\leq j\leq a-1$, $\ell\in \N_a\setminus \{d_1,\dots,d_c\}$, }\\
x_{v_\ell \lambda}^{(k_\ell)}&\textrm{for $j=a$, $\ell\in \N_a\setminus \{d_1,\dots,d_c\}$,}
\end{array}
\right.\\
z_{d_\ell j}&=\left\{
\begin{array}{ll}
x_{\tau(w'_\ell)j}^{(q_\ell)} &\textrm{for $1\leq j\leq a-1$, $1\leq\ell\leq c$, }\\
x_{\tau(w'_\ell)\lambda}^{(q_\ell)}&\textrm{for $j=a$, $1\leq\ell\leq c$.}
\end{array}
\right.
\end{align*}
In particular, we write $D^{\bf c}_{\lambda}(L_S)$, $D^{\bf r}_{\lambda}(K_V)$ for $D\cc_\lambda(\emptyset,L_S)$,  $D\rr_\lambda(\emptyset,K_V)$, resp.
(Note that $Y$ and $Z$ are formed from the columns  (resp. rows) of certain matrices, which is the reason for using $\cc$ (resp. $\rr$) in the above notation.)

We further denote by
$$d\cc_{\lambda, W^\com}(Q_\s),\quad d\cc_{\lambda,W}(L_S\setminus Q)$$
  the determinant of the submatrix of  $Y$ containing the  columns labeled by $f_1,\dots,f_\ell$ (resp. by $\ell\in\N_b\setminus \{f_1,\dots,f_\ell\}$) and rows labeled by $i_\ell\in W^\com$ (resp. $i_\ell\in W$), 
and by
$$d\rr_{\lambda,U^\com}(Q_\tau),\quad d\rr_{\lambda,U}(K_V\setminus Q)$$
  the determinant of the submatrix of $Z$ containing  the  rows labeled by $d_1,\dots,d_\ell$ (resp. by $\ell\in \N_a\setminus\{d_1,\dots,d_\ell\}$) and columns labeled by $j_\ell\in U^\com$ (resp. $j_\ell\in U$). 
We let the determinant of the empty matrix be $1$. 

\begin{example}
Let $n=4$, $K=\{1,2,3\}$, $L=\{2,3,4,5\}$, $V=(4,1,2)$, $S=(3,4,2,1)$, $U=\{2\}$, $W=\{1,3\}$, $\s=\id$, $\tau=(24)$, $\lambda=4$.
Then $Q=\{2,3\}$,  
$$
D\cc_\lambda(Q_\s,L_S\setminus Q)=
\left|\begin{array}{cccc}
x_{11}^{(2)}&x_{13}^{(3)}&x_{12}^{(4)}&x_{11}^{(5)}\\
x_{21}^{(2)}&x_{23}^{(3)}&x_{22}^{(4)}&x_{21}^{(5)}\\
x_{31}^{(2)}&x_{33}^{(3)}&x_{32}^{(4)}&x_{31}^{(5)}\\
x_{41}^{(2)}&x_{43}^{(3)}&x_{42}^{(4)}&x_{41}^{(5)}\\
\end{array}\right|,\quad
D\rr_\lambda(Q_\tau,K_V\setminus Q)=
\left|\begin{array}{ccc}
x_{41}^{(1)}&x_{42}^{(1)}&x_{44}^{(1)}\\
x_{41}^{(2)}&x_{42}^{(2)}&x_{44}^{(2)}\\
x_{21}^{(3)}&x_{22}^{(3)}&x_{24}^{(3)}
\end{array}\right|,
$$
$$
d\cc_{\lambda, W^\com}(Q_\s)=
\left|\begin{array}{cc}
x_{21}^{(2)}&x_{23}^{(3)}\\
x_{41}^{(2)}&x_{43}^{(3)}\\
\end{array}\right|,\quad
 d\cc_{\lambda,W}(L_S\setminus Q)=
\left|\begin{array}{cc}
x_{12}^{(4)}&x_{11}^{(5)}\\
x_{32}^{(4)}&x_{31}^{(5)}\\
\end{array}\right|,
$$
$$
d\rr_{\lambda,U^\com}(Q_\tau)=
\left|\begin{array}{cc}
x_{41}^{(2)}&x_{44}^{(2)}\\
x_{21}^{(3)}&x_{24}^{(3)}
\end{array}\right|,\quad
d\rr_{\lambda,U}(K_V\setminus Q)=
\left|\begin{array}{ccc}
x_{42}^{(1)}
\end{array}\right|.
$$
\end{example}
Let $u_{\gamma,\delta}$ denote the $n(\gamma-1)+\delta$-th basis element in the $C$-module $C^{n^2}$. 
We write 
\begin{align*}
G'&=\bigg\{\sum_{\alpha=|K|}^n D\rr_\alpha(K_V)u_{\gamma,\alpha}\mid \;\gamma\in \N_n,K\subset \N_m,V\subset \N_n, |V|=|K|\bigg\},\\
G''&=\bigg\{\sum_{\beta=|L|}^n D\cc_\beta(L_S)u_{\beta,\delta}\mid \;\delta\in \N_n,L\subset \N_m,S\subset \N_n, |S|=|L|\bigg\}.
\end{align*}

Note that every polynomial in $C$ can be treated as a function on $M_n(\F)^m$. 
Let $C_{mult}$ denote the elements in $C$ that are multilinear in some set of variables $x_{k_1},\dots,x_{k_\ell}$, $1\leq k_i\neq k_j\leq m$.  
We will say that $G$ is a {\em multilinear Gr\"obner basis}  of a $C$-module $N\subset C^r$ if there exists a Gr\"obner basis $\td G$ of $N$ such that  
$G=\td G \cap C_{mult}$. 
\begin{proposition}[{\cite[Theorem 8.4]{On}}]\label{multgb}
The set $G'$ is a multilinear Gr\"obner basis of the submodule $M'$ of $M$ generated by the first $mn^2$ rows of $\Xi$, and the set 
$G''$ is a multilinear Gr\"obner basis of the submodule $M''$ of $M$ generated by the last $mn^2$ rows of $\Xi$.
\end{proposition}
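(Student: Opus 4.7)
The plan is to reduce the assertion to \cite[Theorem 8.4]{On}, which supplies a multilinear Gr\"obner basis for the submodule generated by the rows of a stack of generic matrices. The tensor product structure $X'_k=X_k\otimes 1$ and $X''_k=1\otimes X_k^t$ forces each of $M'$ and $M''$ to split into $n$ independent copies of such a module, one copy per coordinate of the ``trivial'' tensor factor.

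For $M'$, the block-diagonal form of $X_k\otimes 1$ means that the rows of $X'_k$ split according to the first index $\gamma\in\N_n$: the rows in block $\gamma$ are supported in columns $n(\gamma-1)+1,\dots,n\gamma$ and there reproduce the rows of $X_k$. Thus $M'=\bigoplus_{\gamma=1}^n M'_\gamma$, and via the obvious identification of the $\gamma$-th block with $C^n$ each $M'_\gamma$ is the row module of the stacked $mn\times n$ matrix whose successive blocks are $X_1,\dots,X_m$. Applying \cite[Theorem 8.4]{On} to this stack yields its multilinear Gr\"obner basis as the determinantal elements $\sum_{\alpha=|K|}^n D\rr_\alpha(K_V)\,u_\alpha$ indexed by $K\subset\N_m$ and $V\in\N_n^{|K|}$; lifting them into the $\gamma$-th block of $C^{n^2}$ reproduces the $\gamma$-component of $G'$.

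For $M''$, the dual structure $1\otimes X_k^t$ makes the rows of $X''_k$ split along the second index $\delta\in\N_n$: the portion indexed by $\delta$ is supported at the positions $\{n(\beta-1)+\delta : \beta\in\N_n\}$, i.e.\ at the $\delta$-th slot of each block. Reading these positions as a copy of $C^n$, the summand $M''_\delta$ is generated by the columns of the $X_k$, equivalently the rows of the transposes $X_k^t$. Applying \cite[Theorem 8.4]{On} to this transposed stack supplies the determinantal elements $\sum_{\beta=|L|}^n D\cc_\beta(L_S)\,u_\beta$ as a multilinear Gr\"obner basis, and reinjecting them into the $\delta$-slot across all blocks gives exactly $G''$.

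What remains is a routine compatibility check. Because the supports of the summands of each direct-sum decomposition are disjoint coordinate sets, a lexicographic order on $C^{n^2}$ refining the block (resp.\ slot) ordering restricts to a lexicographic order on each summand; the union of the summand-wise Gr\"obner bases is then a Gr\"obner basis for the whole module. Multilinearity passes freely through these identifications since it is a condition on the coefficients in $C$. The main obstacle is essentially notational --- aligning the row, column and tensor indices so that the lifted generators coincide exactly with the explicit forms defining $G'$ and $G''$.
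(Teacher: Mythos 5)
Your argument is correct and matches the paper's treatment: the proposition is stated there as a direct citation of \cite[Theorem 8.4]{On}, and the block/slot decompositions you describe (each $M'_\gamma$, resp.\ $M''_\delta$, identified with the row module of the stacked generic matrices $X_1,\dots,X_m$, resp.\ $X_1^t,\dots,X_m^t$, with the chosen order restricting to a lexicographic one on each summand and multilinearity cutting out exactly $G'$, resp.\ $G''$) are precisely the routine identifications that make that citation apply.
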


We will use this proposition in order to show that one can obtain a multilinear Gr\"obner basis of $M$ by simply joining $G'$ and $G''$. To establish this result in Lemma \ref{grob} we need  some preliminary lemmas. 

For a subset $U\subset \N_a$ we set $U_\alpha=U$ if $a\not\in U$, and $U_\alpha=(U\setminus\{a\})\cup \{\alpha\}$ if $a\in U$.

\begin{lemma}\label{zagrob}
Let $a,b,c\in \N$, $c\leq a, b\leq n$, $U\subset\N_a$, $W\subset \N_b$, $|U|=|W|=c$, $Q=\{q_1,\dots,q_c\}$. 
For $a\leq \alpha$, $b\leq \beta$ we have
\beq\label{Dvs}
\sum_{\s\in \sym U_\alpha}(-1)^\s d\cc_{\beta,W}(Q_\s)=\sum_{\tau\in \sym W_\beta}(-1)^\tau d\rr_{\alpha,U}(Q_\tau).
\eeq
\end{lemma}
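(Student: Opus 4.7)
The plan is to reduce both sides of \eqref{Dvs} to a single common double sum by expanding each determinant via the Leibniz rule, after which the identity becomes essentially tautological.

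First I would unwind the definitions carefully: $d\cc_{\beta, W}(Q_\s)$ is the determinant of a $c \times c$ matrix whose $\ell$-th column consists of the $W_\beta$-rows of $X_{q_\ell}$ in column $\s(u_\ell)$ (with the convention that index $b$ is read off from row $\beta$ of the underlying generic matrix, and similarly $a$ from $\alpha$). Writing $U_\alpha = \{u_1 < \cdots < u_c\}$ and $W_\beta = \{w_1 < \cdots < w_c\}$, the Leibniz formula gives
$$d\cc_{\beta, W}(Q_\s) = \sum_{\rho \in S_c} (-1)^\rho \prod_{\ell = 1}^c x^{(q_\ell)}_{w_{\rho(\ell)},\, \s(u_\ell)}.$$
Summing over $\s \in \sym U_\alpha$ and reparameterizing the Leibniz index $\rho$ as a permutation $\tau \in \sym W_\beta$ via $\tau(w_\ell) = w_{\rho(\ell)}$—a sign-preserving bijection—transforms the left-hand side of \eqref{Dvs} into
$$\sum_{\s \in \sym U_\alpha}\,\sum_{\tau \in \sym W_\beta} (-1)^{\s \tau} \prod_{\ell = 1}^c x^{(q_\ell)}_{\tau(w_\ell),\, \s(u_\ell)}.$$

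For the right-hand side the symmetric computation applies: $d\rr_{\alpha, U}(Q_\tau)$ is a $c \times c$ determinant whose $\ell$-th row consists of the $U_\alpha$-columns of $X_{q_\ell}$ at row $\tau(w_\ell)$. Leibniz-expanding and re-indexing the inner permutation as $\s \in \sym U_\alpha$ via $\s(u_\ell) = u_{\rho(\ell)}$ produces exactly the same double sum, and \eqref{Dvs} follows. The only real obstacle I anticipate is purely bookkeeping—verifying that the substitutions $a \leftrightarrow \alpha$ and $b \leftrightarrow \beta$ in the definitions of $U_\alpha, W_\beta$ and in the last-row/last-column entries of the original matrices $Y$ and $Z$ are handled uniformly on both sides. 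Once the $(i,\ell)$ entries of the two $c\times c$ matrices are written in a common format, the two sides of \eqref{Dvs} collapse to the same expression.
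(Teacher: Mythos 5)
Your proof is correct and takes essentially the same route as the paper's: both expand the $c\times c$ determinants by the Leibniz formula and re-index the inner permutation (transported along the order-preserving bijections to $\sym W_\beta$, resp.\ $\sym U_\alpha$) so that both sides of \eqref{Dvs} collapse to the same symmetric double sum over $\sym U_\alpha\times\sym W_\beta$. The paper merely streamlines the bookkeeping you flag at the end by first reducing without loss of generality to $U=W=Q=\{1,\dots,c\}$ and $\alpha=\beta=c$.
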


\begin{proof}
Notice that we can assume without loss of generality that $U=W=Q=\{1,\dots,c\}$, $\alpha=\beta=c$. 
We compute 
\begin{align*}
\sum_{\s\in \sym U}(-1)^\s d\cc_{\beta,W}(Q_\s)&=\sum_{\s\in\sym c
}(-1)^\s\sum_{\rho\in\sym c}(-1)^\rho\, x_{1,\s\rho(1)}^{(\rho(1))}\cdots x_{c,\s\rho(c)}^{(\rho(c))}\\
&=\sum_{\rho\in\sym c}(-1)^{\rho^{-1}}\sum_{\s\in\sym c}(-1)^{\s^{-1}}\, x_{\rho^{-1}\s^{-1}(1),1}^{(\s^{-1}(1))}\cdots x_{\rho^{-1}\s^{-1}(c),c}^{(\s^{-1}(c))}\\
&=\sum_{\tau\in \sym W}(-1)^\tau d\rr_{\alpha,U}(Q_\tau).
\end{align*}
\end{proof}

\begin{lemma}\label{zagrob2}
Let $K=\{k_1,\dots,k_a\}$, $L=\{l_1,\dots,l_b\}$, $Q=K\cap L=\{q_1,\dots,q_c\}$. Let ${(d_1,f_1),\dots,(d_c,f_c)}$ be such that 
$q_{\ell}=k_{d_\ell}=l_{f_\ell}$, and let $V=(v_1,\dots,v_a)\in \N_n^a$, $S=(s_1,\dots,s_b)\in\N_n^b$.
For $a\leq \alpha\leq n$, $b\leq \beta \leq n$ we have   
\beq\label{grobid}
\begin{aligned}
&(-1)^{\sum d_\ell}\sum_{U\subset \N_a, |U|=a-c}(-1)^{\sum_{U^\com} u}\;d\rr_{\alpha,U}(K_V\setminus Q)\sum_{\s\in \sym (U^\com)_\alpha} (-1)^\s D\cc_\beta(Q_\s,L_S\setminus Q)\\
=&(-1)^{\sum f_\ell}\sum_{W\subset \N_b, |W|=b-c}(-1)^{\sum_{W^\com} w}\;d\cc_{\beta,W}(L_S\setminus Q)\sum_{\tau\in\sym (W^\com)_\beta} (-1)^\tau D\rr_\alpha(Q_\tau,K_V\setminus Q).
\end{aligned}
\eeq
\end{lemma}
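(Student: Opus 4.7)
The plan is to reduce both sides of \eqref{grobid} to a common expression via a double Laplace expansion combined with Lemma \ref{zagrob}.

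First, I would apply Laplace expansion to the $b\times b$ determinant $D\cc_\beta(Q_\s,L_S\setminus Q)$ along the $c$ columns labeled $f_1,\dots,f_c$ (those indexed by $Q$), obtaining
$$D\cc_\beta(Q_\s,L_S\setminus Q)=\sum_{\substack{W^\com\subset\N_b\\|W^\com|=c}}(-1)^{\sum f_\ell+\sum_{W^\com}w}\,d\cc_{\beta,W^\com}(Q_\s)\,d\cc_{\beta,W}(L_S\setminus Q).$$
Summing over $\s\in\sym(U^\com)_\alpha$ and pulling the $W^\com$-sum outside, the $\s$-dependence of each summand is now concentrated in the factor $d\cc_{\beta,W^\com}(Q_\s)$.

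Second, I would invoke Lemma \ref{zagrob} with $U^\com$ and $W^\com$ (both of cardinality $c$) in place of $U$ and $W$ to rewrite
$$\sum_{\s\in\sym(U^\com)_\alpha}(-1)^\s d\cc_{\beta,W^\com}(Q_\s)=\sum_{\tau\in\sym(W^\com)_\beta}(-1)^\tau d\rr_{\alpha,U^\com}(Q_\tau).$$
After this substitution the LHS of \eqref{grobid} becomes a quadruple sum over $U$, $W^\com$, $\s$ and $\tau$ in which the only $U$-dependence is in the product $d\rr_{\alpha,U}(K_V\setminus Q)\,d\rr_{\alpha,U^\com}(Q_\tau)$.

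Third, I would interchange summation orders so that the innermost sum reads
$$\sum_{\substack{U\subset\N_a\\|U|=a-c}}(-1)^{\sum_{U^\com}u}\,d\rr_{\alpha,U}(K_V\setminus Q)\,d\rr_{\alpha,U^\com}(Q_\tau),$$
which, up to the sign $(-1)^{\sum d_\ell}$, is precisely the Laplace expansion of the $a\times a$ determinant $D\rr_\alpha(Q_\tau,K_V\setminus Q)$ along the $c$ rows labeled $d_1,\dots,d_c$. Substituting this back and collecting the sign factors reproduces the RHS of \eqref{grobid}.

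The hard part will be the bookkeeping of the four families of signs $(-1)^{\sum d_\ell}$, $(-1)^{\sum f_\ell}$, $(-1)^{\sum_{U^\com}u}$, $(-1)^{\sum_{W^\com}w}$ produced by the two Laplace expansions and the relabelings: the equality hinges on every sign lining up exactly, which I would verify by tracking the row/column positions of each minor and the parity of the cyclic shift that moves the $Q$-rows and $Q$-columns into contiguous position before expansion. Apart from this sign accounting, the argument is essentially formal, so I expect no other obstacle.
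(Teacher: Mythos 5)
Your proposal is correct and follows essentially the same route as the paper: Laplace expansion of $D\cc_\beta(Q_\s,L_S\setminus Q)$ along the columns $f_1,\dots,f_c$, conversion of the inner alternating sum via Lemma \ref{zagrob} applied to $U^\com$ and $W^\com$, and recognition of the remaining $U$-sum as the Laplace expansion of $D\rr_\alpha(Q_\tau,K_V\setminus Q)$ along the rows $d_1,\dots,d_c$. The sign bookkeeping you flag is already settled by the standard generalized Laplace signs $(-1)^{\sum f_\ell+\sum_{W^\com}w}$ and $(-1)^{\sum d_\ell+\sum_{U^\com}u}$, exactly as in the paper.
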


\begin{proof}
Using the Laplace expansion by the columns $f_1,\dots,f_c$ we obtain
$$
D\cc_\beta(Q_\s,L_S\setminus Q)=\sum_{W\subset\N_b,|W|=b-c}(-1)^{\sum f_\ell}(-1)^{\sum_{W^\com} w} d\cc_{\beta,W}(L_S\setminus Q)d\cc_{\beta, W^\com}(Q_\s),
$$
and analogously using the Laplace expansion by the rows $d_1,\dots,d_c$ we have
$$
D\rr_\alpha(Q_\tau,K_V\setminus Q)=\sum_{U\subset\N_a,|U|=a-c}(-1)^{\sum d_\ell}(-1)^{\sum_{U^\com} u} d\rr_{\alpha,U}(K_V\setminus Q)d\rr_{\alpha,U^\com}(Q_\tau).
$$
By applying Lemma \ref{zagrob} we arrive at the desired conclusion. 
\end{proof}

Before proceeding to the proof of the next lemma we make a little digression and recall some facts concerning Gr\"obner bases and syzygies (see e.g.~\cite{Eis}). 
Let $A$ be a polynomial algebra. By $u_1,\dots,u_r$ we denote the generators of the free module $A^r$.  
Let $N$ be a module over $A$ and $\{g_1,\dots,g_t\}$ its Gr\"obner basis with respect to any monomial order on $A^r$.
Let $\inn(g_i)$ stand for the initial term of $g_i$. If $\inn(g_i)$ and $\inn(g_j)$ involve the same basis element of $A^r$, set
$$m_{ij}=\frac{\inn(g_i)}{{\rm GCD}(\inn(g_i),\inn(g_j))}\in A.$$ 
For each such pair $i,j$ choose an expression 
$$\s_{ij}:=m_{ji}g_i-m_{ij}g_j=\sum_\ell h_\ell^{(ij)}g_\ell,$$
 such that $\inn(\s_{ij})\geq \inn(h_\ell^{(ij)}g_\ell)$ for every $\ell$; it is
 called  a standard expression of $\s_{ij}$ in terms of the $g_\ell$, and its existence is guaranteed by the fact that $\{g_1,\dots,g_t\}$ is a Gr\"obner basis of $N$. 
For other pairs $i,j$ set $m_{ij}=0$, $h_\ell^{(ij)}=0$. 
By Shreyer's theorem (see e.g.~\cite[Theorem 15.10]{Eis}) the module of syzygies on the Gr\"obner basis $\{g_1,\dots,g_t\}$ of a module $N$ 
is generated by 
$$\tau_{ij}:=m_{ji}u_i-m_{ij}u_j-\sum_\ell h_\ell^{(ij)}u_\ell,$$
$1\leq i,j\leq t$. 

Let us define a monomial order $>$ on the module $C^{n^2}$. 
On $C$ we set $x_{i_1j_1}^{(k_1)}>x_{i_2j_2}^{(k_2)}$ if $(k_1,i_1,j_1)<(k_2,i_2,j_2)$ lexicographically (i.e. $k_1<k_2$, or $k_1=k_2$ and $i_1<i_2$, or $k_1=k_2$, $i_1=i_2$ and $j_1<j_2$). 
We define $pu_{\alpha,\beta}>qu_{\gamma,\delta}$ if $(\alpha,\beta,q)<(\gamma,\delta,p)$ lexicographically (i.e. $\alpha<\gamma$, or $\alpha=\gamma$ and $\beta<\delta$, or $\alpha=\gamma$, $\beta=\delta$ and $p>q$).

\begin{lemma}\label{grob}
The set $G'\cup G''$ is a multilinear Gr\"obner basis of $M$ with respect to the order $>$ on $C^{n^2}$.
\end{lemma}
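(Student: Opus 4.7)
The plan is to verify Buchberger's S-pair criterion for $G = G' \cup G''$, keeping track of multilinearity throughout. Pairs from $G'$ alone (respectively, $G''$ alone) already admit standard expressions in $G'$ (resp.\ $G''$) by Proposition \ref{multgb}, and these expressions are multilinear; they are \emph{a fortiori} standard expressions in $G$. Hence the only substantive case is a mixed S-pair coming from
\[
g'=\sum_{\alpha=|K|}^n D\rr_\alpha(K_V)\,u_{\gamma,\alpha}\in G',\qquad g''=\sum_{\beta=|L|}^n D\cc_\beta(L_S)\,u_{\beta,\delta}\in G''.
\]

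Under the chosen order a smaller first subscript of a basis vector yields a larger $u_{\cdot,\cdot}$, so
\[
\inn(g')=D\rr_{|K|}(K_V)\,u_{\gamma,|K|},\qquad \inn(g'')=D\cc_{|L|}(L_S)\,u_{|L|,\delta}.
\]
These initial terms target the same basis vector precisely when $\gamma=|L|$ and $\delta=|K|$; set $a:=|K|=\delta$ and $b:=|L|=\gamma$. With $N:=\mathrm{lcm}\bigl(D\rr_a(K_V),D\cc_b(L_S)\bigr)$ one forms the S-polynomial
\[
\mathcal S=\tfrac{N}{D\rr_a(K_V)}\,g'\;-\;\tfrac{N}{D\cc_b(L_S)}\,g'',
\]
and the task is to exhibit a standard expression $\mathcal S=\sum_i h_i g_i$ with each $g_i\in G$.

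This is exactly the content of Lemma \ref{zagrob2}: taking $Q=K\cap L$ together with $\alpha=a$, $\beta=b$, the identity \eqref{grobid} equates two Laplace-type expansions of the same polynomial. On the left side each summand has the form $d\rr_{\alpha,U}(K_V\setminus Q)\cdot\sum_{\s}(-1)^\s D\cc_\beta(Q_\s,L_S\setminus Q)$, and the alternating sum in the second factor, after expansion along the $f_\ell$-columns, picks out the $u_{\gamma,\delta}$-coordinate of a specific $G''$-element attached to the enlarged index set. The right side plays the analogous role via $G'$-elements attached to the enlarged row data. Matching the two expansions, with signs supplied by Lemma \ref{zagrob}, rewrites $\mathcal S$ as the desired $C$-linear combination of members of $G'\cup G''$ indexed by strictly smaller or shifted data.

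The principal obstacle will be verifying that this combination is \emph{standard} in Buchberger's sense: one must check that $\inn(\mathcal S)$ dominates $\inn(h_i g_i)$ for every summand produced by the reorganization of \eqref{grobid}. This should reduce to the observation that the leading monomial of each determinant $D\rr$, $D\cc$, $d\rr$, $d\cc$ under our lex order is its diagonal product, so that the off-diagonal terms of the Laplace expansions are automatically dominated by the S-polynomial's leading term. Multilinearity is automatic since every summand is a product of minors supported on pairwise disjoint variable blocks, hence lies in $C_{mult}$; we never leave the multilinear part. Once these two bookkeeping steps are carried out, Buchberger's criterion is satisfied multilinearly and $G'\cup G''$ is a multilinear Gr\"obner basis of $M$.
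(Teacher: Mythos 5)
Your overall route coincides with the paper's: reduce via Buchberger's criterion and Proposition \ref{multgb} to mixed pairs $g_i\in G'$, $g_j\in G''$ whose initial terms involve the same basis vector, and use Lemma \ref{zagrob2}, summed over the basis elements $u_{\lambda,\alpha}$ and $u_{\beta,\lambda}$, as the rewriting identity. The problem is that the two steps you explicitly defer are precisely the substance of the lemma, and your sketch of them does not go through as stated. First, you never use multilinearity to restrict and normalize the pairs. Only mixed pairs for which $\s_{ij}$ is multilinear have to be treated, and multilinearity is not ``automatic'': it forces the factors of $\inn(g_i)$ and $\inn(g_j)$ involving the common variables $Q=K\cap L$ to coincide (so that $v_{d_\ell}=f_\ell$ and $s_{f_\ell}=d_\ell$), and this is exactly what allows one to write $g_i=\sum_{\alpha}D\rr_\alpha(Q_\tau,K_V\setminus Q)u_{b,\alpha}$ and $g_j=\sum_{\beta}D\cc_\beta(Q_\s,L_S\setminus Q)u_{\beta,a}$ with $\tau=\s=\id$, which is the only format in which the identity \eqref{grobid} literally produces an expression for $\s_{ij}$. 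For a pair whose $Q$-entries in the two leading terms differ, the multiplier $m_{ji}$ still involves variables from $Q$, so $m_{ji}g_i-m_{ij}g_j$ is quadratic in some shared $x_q$; such pairs are excluded, and your closing claim that every summand lies in $C_{mult}$ fails in that generality. (A smaller slip: the S-polynomial is formed from the least common multiple of the initial \emph{terms}, not of the polynomials $D\rr_a(K_V)$ and $D\cc_b(L_S)$.)

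Second, and more seriously, the standardness check cannot be disposed of by noting that the leading monomial of each determinant is its diagonal product. The diagonal terms cancel in $\s_{ij}$, so $\inn(\s_{ij})$ lies strictly below the common multiple of the two initial monomials; what must be verified is $\inn(\s_{ij})\ge\inn(h_\ell g_\ell)$ for every term of the rewriting, and domination by that common multiple does not give this. The paper has to compute $\inn(\s_{ij})$ explicitly: it is the monomial \eqref{inn} with one specific pair of factors transposed, with a case distinction according to whether $k_{a-1}\ge l_{b-1}$ or $k_{a-1}<l_{b-1}$, and then it bounds the initial terms \eqref{inn1} and \eqref{inn2} of all products $d\cc_{\lambda,W}(L_S\setminus Q)\,D\rr_\alpha(Q_\tau,K_V\setminus Q)$ and $d\rr_{\lambda,U}(K_V\setminus Q)\,D\cc_\beta(Q_\s,L_S\setminus Q)$ by that monomial, using that they arise from \eqref{inn} by permuting row (respectively column) indices and are therefore no larger in the chosen order. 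Without this comparison the assertion that \eqref{grobid} yields a \emph{standard} expression is unproved, so as it stands your argument has a genuine gap rather than mere bookkeeping left to the reader.
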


\begin{proof}
Using the Buchberger's criterion (see e.g.~\cite[Theorem 15.8]{Eis}) together with Proposition \ref{multgb} we see that it suffices  to verify that $\sigma_{ij}$ has a standard expression in terms of $g_\ell\in G'\cup G''$ for $g_i\in G'$, $g_j\in G''$, initial terms of which involve the same basis element in $C^{n^2}$  
and  for which $\sigma_{ij}$ is multilinear. 
Choose $g_i\in G'$, $g_j\in G''$ such that $\inn(g_i)$ and $\inn(g_j)$ involve the same basis element of $C^{n^2}$. 
Define sets $K,L\subset \N_m$ such that  $g_i$ depends on the variables $x_{k_\ell}$ for $k_\ell\in K$, $g_j$ on $x_{l_\ell}$ for $l_\ell\in L$. 
Then the initial term involves the variables in $Q=K\cap L$.
For $\s_{ij}$ to be multilinear the factors in the initial terms of $g_i$ and $g_j$ dependent on the variables in $Q$ need to coincide. 
Hence, 
$$g_i=\sum_{\alpha=a}^n D\rr_\alpha(Q_\tau,K_V\setminus Q)u_{b,\alpha},$$
$$g_j=\sum_{\beta=b}^n D\cc_\beta(Q_\s,L_S\setminus Q)u_{\beta,a},$$
where $|K|=a$, $|L|=b$, $|Q|=c$, $V\in \N_n^a$, $S\in \N_n^b$, and 
 $W=\{d_1,\dots,d_c\}^\com$, $\tau=\id$,  $U=\{f_1,\dots,f_c\}^c$, $\s=\id$, 
and we have
$$\inn(g_i)=\prod_{\{\ell\mid\, k_{\ell}\in Q\}}x_{f_\ell,d_\ell}^{(q_\ell)}\prod_{\{\ell\mid\, k_\ell\in K\setminus Q\}}x_{v_\ell,\ell}^{(k_\ell)},\quad\quad
\inn(g_j)=\prod_{\{\ell\mid\, l_{\ell}\in Q\}}x_{f_\ell,d_\ell}^{(q_\ell)}\prod_{\{\ell\mid\, l_\ell\in L\setminus Q\}}x_{\ell,s_\ell}^{(l_\ell)}.$$
By Lemma \ref{zagrob2} we deduce
\begin{multline*}
\sum_{\lambda=b}^n  (-1)^{\sum f_\ell}\sum_{W\subset \N_b, |W|=b-c}(-1)^{\sum_{W^\com} w}\;d\cc_{\lambda,W}(L_S\setminus Q)\sum_{\tau\in\sym (W^\com)_\lambda} (-1)^\tau \sum_{\alpha=a}^n D\rr_\alpha(Q_\tau,K_V\setminus Q)u_{\lambda,\alpha}\\
=\sum_{\lambda=a}^n (-1)^{\sum d_\ell}\sum_{U\subset \N_a, |U|=a-c}(-1)^{\sum_{U^\com} u}\;d\rr_{\lambda,U}(K_V\setminus Q)\sum_{\s\in \sym (U^\com)_\lambda} (-1)^\s \sum_{\beta=b}^n D\cc_\beta(Q_\s,L_S\setminus Q)u_{\beta,\lambda}.
\end{multline*}
Indeed, restricting to the basis element $u_{\gamma,\delta}\in C^{n^2}$ on  both sides of this identity we get the identity \eqref{grobid} for $\alpha=\delta,\beta=\gamma$.
It remains to prove that this identity induces a standard expression for $\sigma_{ij}$. 
It is enough to check that the initial terms of the elements in the Gr\"obner basis (except for $g_i$ and $g_j$) that appear in this identity and involve the same basis element of $C^{n^2}$  are smaller or equal to $\inn(\s_{ij})$. 
Those are 
\begin{align}\label{inn1}
\inn\Big(d\cc_{\lambda,W}(L_S\setminus Q)D\rr_{a}(Q_\tau,K_V\setminus Q)\Big)&\leq
\prod_{\{\ell\mid\; l_\ell\in L\setminus Q\}} x_{w_l,s_l}^{(l_\ell)}\,\prod_{\{\ell\mid\; k_\ell\in K\setminus Q\}}x_{v_\ell,\ell}^{(k_\ell)}\,\prod_{\{\ell\mid\; k_\ell\in Q\}}x_{\tau(w'_\ell),d_\ell}^{(q_\ell)},\\\label{inn2}
\inn\Big(d\rr_{\lambda,U}(K_V\setminus Q)D\cc_{b}(Q_\sigma,L_S\setminus Q)\Big)&\leq
\prod_{\{\ell\mid\; k_\ell\in K\setminus Q\}} x_{v_l,u_l}^{(k_\ell)}\,\prod_{\{\ell\mid\; l_\ell\in L\setminus Q\}}x_{\ell,s_\ell}^{(l_\ell)}\,\prod_{\{\ell\mid\; l_\ell\in Q\}}x_{f_\ell,\s(u'_\ell)}^{(q_\ell)},
\end{align}
where $W=\{w_1,\dots,w_{b-c}\}\subset \N_b$, $W^\com=\{w'_1,\dots,w'_c\}$,  $U=\{u_1,\dots,u_{a-c}\}\subset \N_a$, 
$U^\com=\{u'_1,\dots,u'_c\}$, $\tau\in \sym W^\com$, $\s\in \sym U^\com$, 
and we need to exclude $W=\{d_1,\dots,d_c\}^\com$, $\tau=\id$, and $U=\{f_1,\dots,f_c\}^c$, $\s=\id$. 
Note that the equalities hold for $\lambda=b$ (resp. $\lambda=a$).

One easily infers that $\inn(\s_{ij})$ equals the product  
\beq\label{inn}
\prod_{\{\ell\mid\, k_{\ell}\in Q\}}x_{f_\ell,d_\ell}^{(q_\ell)}\prod_{\{\ell\mid\, k_\ell\in K\setminus Q\}}x_{v_\ell,\ell}^{(k_\ell)}\prod_{\{\ell\mid\, l_\ell\in L\setminus Q\}}x_{\ell,s_\ell}^{(l_\ell)},
\eeq
in which we replace the factor $x_{v_{a-1},a-1}^{(k_{a-1})}x_{v_a,a}^{(k_a)}$ (resp. $x_{b-1,s_{b-1}}^{(l_{b-1})}x_{b,s_b}^{(l_b)}$) by  
$x_{v_{a},a-1}^{(k_{a-1})}x_{v_{a-1},a}^{(k_a)}$ (resp.  $x_{b-1,s_{b}}^{(l_{b-1})}x_{b,s_{b-1}}^{(l_b)}$) 
if $k_{a-1}\geq l_{b-1}$ (resp. if $k_{a-1}<l_{b-1}$). 
The terms in \eqref{inn1} are obtained by  permuting the indices corresponding to the rows of the elements $x_{\ell,s_\ell}^{(l_\ell)}$ appearing in 
\eqref{inn}  (notice that  the elements $x_{f_\ell,d_\ell}^{(q_\ell)}$ are also of that form), 
while the terms in  \eqref{inn2} are obtained by  permuting the indices corresponding to the columns of the elements $x_{v_\ell,\ell}^{(k_\ell)}$ appearing in 
\eqref{inn}  (notice that  the elements $x_{f_\ell,d_\ell}^{(q_\ell)}$ are also of that form). 
Since the permutation described in order to obtain the initial term of $\s_{ij}$ leads to the biggest monomial among the monomials in \eqref{inn1}, \eqref{inn2} with respect to the given order $>$ on $C$, $\s_{ij}$ has a standard expression, which concludes the proof.
\end{proof}

We will need a slight generalization of  Lemma \ref{grob}. Let $K=\{k_1,\dots,k_a\}\subset \N_m$, 
$L=\{l_1,\dots,l_b\}\subset \N_m$. We denote  
$$\Xi^{(KL)}=(X'_{k_1},\dots,X'_{k_{a}},X''_{l_1},\dots,X''_{l_{b}}),$$ 
and write $M^{(KL)}$ for the module generated by the rows of $\Xi^{(KL)}$. 
Let $G'^{(K)}\subseteq G'$ be a multilinear Gr\"obner basis on the rows of $(X'_{k_1},\dots,X'_{k_a})$, and 
$G''^{(L)}\subseteq G''$ be a multilinear Gr\"obner basis on the rows of $(X''_{l_1},\dots,X''_{l_b})$. 
We state the next lemma without proof since one only needs to inspect the proof of Lemma \ref{grob}, and notice that it carries over to a more general situation of the following lemma. 
\begin{lemma}\label{grobmod}
The set $G'^{(K)}\cup G''^{(L)}$ is a multilinear Gr\"obner basis of $M^{(KL)}$.
\end{lemma}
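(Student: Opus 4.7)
The plan is to reduce this to the proof of Lemma \ref{grob} via a locality argument. By Buchberger's criterion it suffices to verify that every $S$-polynomial $\sigma_{ij}$ with $g_i, g_j \in G'^{(K)} \cup G''^{(L)}$, whose initial terms share a basis vector of $C^{n^2}$ and for which $\sigma_{ij}$ is multilinear, admits a standard expression in $G'^{(K)} \cup G''^{(L)}$. When $g_i$ and $g_j$ both lie in $G'^{(K)}$ (or both in $G''^{(L)}$), this follows directly from Proposition \ref{multgb} applied to the generic matrices $X_{k_1},\dots,X_{k_a}$ (resp.\ $X_{l_1},\dots,X_{l_b}$), since $G'^{(K)}$ (resp.\ $G''^{(L)}$) is by construction a multilinear Gr\"obner basis of the corresponding submodule.

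In the remaining mixed case, $g_i \in G'^{(K)}$ and $g_j \in G''^{(L)}$, the idea is to invoke the proof of Lemma \ref{grob} almost verbatim. Multilinearity of $\sigma_{ij}$ again forces $g_i$ and $g_j$ to share the same factor on the variables indexed by $Q = K' \cap L'$, where $K' \subseteq K$ and $L' \subseteq L$ are the subsets of indices on which $g_i$ and $g_j$ actually depend. Applying Lemma \ref{zagrob2} to these $K'$ and $L'$ then produces an identity expressing $\sigma_{ij}$ as a combination of determinantal elements $D\rr_\alpha$ and $D\cc_\beta$ whose indexing subsets are contained in $K' \cup L' \subseteq K \cup L$; hence every such element already belongs to $G'^{(K)} \cup G''^{(L)}$. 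The initial-term estimates \eqref{inn1} and \eqref{inn2} transfer unchanged, since they depend only on the monomial order $>$ on $C^{n^2}$ and on the shape of the factored determinants, not on the ambient index set.

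The main point worth underlining, and the only place where the generalization is not immediate, is precisely the closure statement above: that the Laplace expansion supplied by Lemma \ref{zagrob2} never produces a Gr\"obner-basis element outside $G'^{(K)} \cup G''^{(L)}$. This closure is automatic from $K' \subseteq K$ and $L' \subseteq L$, so no genuinely new technical difficulty arises beyond the bookkeeping already performed in the proof of Lemma \ref{grob}, and the argument terminates in the same way.
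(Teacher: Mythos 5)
Your proposal is correct and follows the paper's own route: the paper deliberately omits the proof, stating that one only needs to inspect the proof of Lemma \ref{grob} and observe that it carries over, which is precisely what you do (same-side pairs handled by Proposition \ref{multgb}, mixed pairs by the standard expression from Lemma \ref{zagrob2}). Your explicit remark that the determinantal elements produced in the mixed case involve only variables indexed by $K'\cup L'\subseteq K\cup L$, hence lie in $G'^{(K)}\cup G''^{(L)}$, is exactly the closure observation that justifies the paper's ``carries over'' claim.
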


\subsection{Reduction to standard and one-sided identities}
We are now in a position to establish our main result on two-sided functional identities, which together with Theorem \ref{FIGPI} gives a full description of functional identities on $M_n(\F)$.

\begin{theorem}\label{2fi}
Let $m\ge 2$ and $K,L\subseteq \N_m$. Every solution of the functional identity
\begin{equation*} 
\sum_{k\in K}F_k(\overline{x}_m^k)x_k = \sum_{l\in L}x_lG_l(\overline{x}_m^l)
\end{equation*} on $M_n(\F)$ is standard modulo one-sided identities.
\end{theorem}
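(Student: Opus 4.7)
The plan is to translate the two-sided functional identity into a coordinate-wise system, interpret it as a syzygy on the rows of $\Xi^{(KL)}$ constructed in the previous subsection, and then use the multilinear Gr\"obner basis furnished by Lemma \ref{grobmod} together with Schreyer's theorem to split the syzygy into a standard part and two one-sided parts.

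First I would express each $F_k(\overline{x}_m^k)$ and $G_l(\overline{x}_m^l)$ as an $n\times n$ matrix of multilinear polynomials in the commuting variables $x_{ij}^{(s)}$ (with $s\ne k$, respectively $s\ne l$), exactly as in the proof of Proposition \ref{1sided}. Reading the product $F_k(\overline{x}_m^k)\,x_k$ coordinate by coordinate shows that its entries form a $C$-linear combination of the rows of $X'_k=X_k\otimes 1$; symmetrically, the entries of $x_l\,G_l(\overline{x}_m^l)$ form a $C$-linear combination of the rows of $X''_l=1\otimes X_l^t$. Hence the identity $\sum_{k\in K} F_k(\overline{x}_m^k)\,x_k - \sum_{l\in L} x_l\,G_l(\overline{x}_m^l) = 0$ is precisely the statement that a specific multilinear tuple, built from the entries of the $F_k$ and $-G_l$, is a syzygy on the rows of $\Xi^{(KL)}$.

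Next I would invoke Lemma \ref{grobmod}, which asserts that $G'^{(K)}\cup G''^{(L)}$ is a multilinear Gr\"obner basis of the module $M^{(KL)}$. By Schreyer's theorem, any syzygy on these generators is a $C$-combination of the $\tau_{ij}$ produced by standard expressions of the $S$-polynomials $\sigma_{ij}$. These syzygies split into three families: (a) those with both $g_i, g_j\in G'^{(K)}$, which express left one-sided identities; (b) those with both $g_i, g_j\in G''^{(L)}$, which express right one-sided identities; and (c) the mixed syzygies with $g_i\in G'^{(K)}$ and $g_j\in G''^{(L)}$, whose explicit standard expression was determined in the proof of Lemma \ref{grob} via the key identity \eqref{grobid}. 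I would pull the multilinearity property through Schreyer's construction, using the fact that the Gr\"obner basis is multilinear, so that the coefficients appearing in the decomposition of our syzygy remain multilinear in the required variables.

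The decisive step is to reinterpret each mixed $\tau_{ij}$ on the matrix side. When $g_i$ lives over the variable $x_k$ and $g_j$ over the variable $x_l$ with $k\ne l$, the left-hand $C$-combination of rows of $X'_k$ described by $\tau_{ij}$ translates into a matrix expression of the form $x_l\,p_{kl}(\overline{x}_m^{kl})$ contributing to $F_k$, and symmetrically the right-hand combination of rows of $X''_l$ translates into $p_{kl}(\overline{x}_m^{kl})\,x_k$ contributing to $G_l$; the two $p_{kl}$ agree because they both come from the same $h_\ell^{(ij)}$ in the standard expression. When $k=l\in K\cap L$, the same mechanism produces a matching central contribution to both $F_k$ and $G_k$, which is the source of $\lambda_k$. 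Summing type-(a) contributions gives $\varphi_k$, type-(b) contributions give $\psi_l$, and type-(c) contributions (together with the $\lambda_i$) give a standard solution in the sense of \eqref{es}; assembling these yields the decomposition \eqref{es1}. The principal obstacle is the bookkeeping in the third step: one must verify that each mixed syzygy genuinely packages into a single multilinear function $p_{kl}:A^{m-2}\to A$ appearing symmetrically in $F_k$ and $G_l$, and that the remainders coming from (a) and (b) annihilate $\sum_k \varphi_k(\overline{x}_m^k) x_k$ and $\sum_l x_l\psi_l(\overline{x}_m^l)$ respectively, so as to satisfy the last line of \eqref{es1}.
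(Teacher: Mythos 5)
Your overall framework is the same as the paper's: write the identity coordinate-wise, view it as a multilinear syzygy on the rows of $\Xi$ (or $\Xi^{(KL)}$), use the multilinear Gr\"obner basis $G'\cup G''$ and Schreyer's theorem, and sort the generating syzygies $\tau_{ij}$ into the two pure families (both $g_i,g_j$ in $G'$, resp.\ in $G''$), which give the one-sided parts $\varphi_k$ and $\psi_l$, and the mixed family. Up to that point your proposal tracks the paper's proof of Theorem \ref{2fi}.

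The gap is in how you dispose of the mixed syzygies, and it is exactly the step you yourself flag as ``the principal obstacle.'' First, your picture of a mixed pair is inaccurate: an element $g_i\in G'$ is a determinantal relation $\sum_\alpha D\rr_\alpha(K'_V)u_{\gamma,\alpha}$ involving up to $n$ generic matrices $X_k$, $k\in K'$, not a single variable $x_k$, and likewise $g_j\in G''$ involves a set $L'$ with $|L'|\le n$; moreover their leading terms share the variables in $K'\cap L'$. Consequently a mixed $\tau_{ij}$, translated back to matrix language, is a genuine two-sided functional identity $\sum_{k\in K'}F_k(\overline{x}_m^k)x_k=\sum_{l\in L'}x_lG_l(\overline{x}_m^l)$ involving many variables on each side, and the assertion that its $F_k$'s have the shape $\sum_l x_l p_{kl}(\overline{x}_m^{kl})+\lambda_k$ with the same $p_{kl}$ reappearing in $G_l$ is precisely the statement that this identity has only standard solutions; it does not follow from the fact that the coefficients of the syzygy are coefficients of rows of $X'_k$ and $X''_l$, nor ``because they both come from the same $h_\ell^{(ij)}$,'' and extracting it directly from the explicit expression \eqref{grobid} would require a substantial computation you have not carried out. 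The paper closes this gap with one short but essential observation that is missing from your argument: by Lemma \ref{grobmod} the mixed $\sigma_{ij}$ has a standard expression over $G'^{(K')}\cup G''^{(L')}$, so the corresponding functional identity has $|K'|\le n$ and $|L'|\le n$, and in that range the classical theory of functional identities (\cite[Corollary 2.23]{FIbook}) guarantees that all solutions are standard; this is where the terms $p_{kl}$ and $\lambda_i$ of \eqref{es1} actually come from. Without invoking that result (or supplying an equivalent direct argument), your step (c) is an unproved claim and the proof is incomplete.
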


\begin{proof}
The functional identity in question  can be written coordinate-wise as a system of equations. 
We will treat the situation where $K=L=\N_m$. 
Finding all solutions of our functional identity in this case, we will obtain the desired ones among those with $F_k=0$ for $k\in \N_m\setminus K$, $G_l=0$ for $l\in \N_m\setminus L$.
For this let us first denote 
$$
F'_k=(F_{11}^{(k)},\dots,F_{1n}^{(k)},\dots,F_{n1}^{(k)},\dots,F_{nn}^{(k)}),\quad
G''_l=(G_{11}^{(l)},\dots,G_{1n}^{(l)},\dots,G_{n1}^{(l)},\dots,G_{nn}^{(l)}),$$
$$H=\big(F'_1,\dots,F'_m,G''_1,\dots,G''_m\big).$$
Then the system of equations reads as 
\beq\label{sys}
H\Xi=\big(F'_1,\dots,F'_m,G''_1,\dots,G''_m\big)
\left(\begin{array}{c}
X'_1\\
\vdots\\
X'_m\\
X''_1\\
\vdots\\
X''_m
\end{array}
\right)=0
\eeq
This system can thus be interpreted as a syzygy on the rows of the matrix $\Xi$.

In our case $F_k,G_l$ are multilinear so we can restrict ourselves to the syzygies on the rows of $\Xi$ which yield multilinear functions in $x_1,\dots,x_m$. 
These are generated by $\tau_{ij}$ for $i,j$ such that $g_i,g_j$ belong to a multilinear Gr\"obner basis $G$ of $M$. 
By Lemma \ref{grob} we have $G=G'\cup G''$. 
If we take elements $g_i,g_j\in G'$ (resp. $g_i,g_j\in G''$), 
then $\s_{ij}=m_{ji}g_i-m_{ij}g_j$ can be expressed in terms of $g_\ell\in G'$ (resp. $g_\ell\in G''$), 
since $G'$ (resp. $G''$) is a (multilinear) Gr\"obner basis of the module generated by the first (resp. last) $mn^2$ rows of $\Xi$. 
These $\tau_{ij}$ thus yield the one-sided functional identities.
It remains to consider $\tau_{ij}$ for $g_i\in G'$, $g_j\in G''$. 
Both elements $g_i,g_j$ are multilinear of degree at most $n$. 
Let $g_i$ involve the variables appearing in $X_k$ for $k\in K'\subset \N_m$, $|K'|\leq n$, 
and $g_j$ those appearing in $X_l$ for $l\in L'\subset \N_m$, $|L'|\leq n$. 
We can treat $g_i,g_j$ as the elements of the Gr\"obner basis on the rows of the matrix $\Xi^{(K'L')}$. 
By Lemma \ref{grobmod}, $\s_{ij}$ can be expressed in terms of those $g_\ell$ that form a Gr\"obner basis on the $\Xi^{(K'L')}$, 
which implies that the functional identity of the form 
$$\sum_{k\in K'}F_k(\overline{x}_m^k)x_k = \sum_{l\in L'}x_lG_l(\overline{x}_m^l)$$ 
 corresponds to the syzygy $\tau_{ij}$. 
Since $|K'|,|L'|\leq n$, this  identity has standard solutions by  \cite[Corollary 2.23]{FIbook}.
\end{proof}

\subsection{An application: Commuting traces} Let $A$ be an algebra over a field $\F$, and let $T:A\to A$ be the trace of an $r$-linear function $t:A^r\to A$  (see Subsection \ref{subdet}).
 We say that $T$ is {\em commuting} if $[T(x),x] =0$ for all $x\in A$. Such a map is said to be of  a {\em standard form}
if there exist  traces of  $(r-i)$-linear functions $\mu_i:A\to \F$, $0\le i\le r$, such that
  $T(x) = \sum_{i=0}^{r} \mu_i(x)x^i$ for all $x\in A$. The question whether every commuting trace of an $r$-linear function on $A$ is of a standard form has been studied extensively for different classes of algebras $A$ (see \cite{Bre04, FIbook} for surveys), but, paradoxically,
for  the basic case where $A=M_n(\F)$ this question had been opened for a long time and has been answered in affirmative only very recently \cite{BS14}, and only under the assumption that char$(\F)=0$. Our goal now is to show that Theorem \ref{2fi} can be used for obtaining an alternative, independent proof, and moreover, a slight generalization with respect to restrictions on char$(F)$.

\begin{corollary}\label{co1}
Let $T:M_n(\F)\to M_n(\F)$ be a commuting trace of an $r$-linear function $t$.  If  {\rm char}$(\F)=0$ or {\rm char}$(\F) >r+1$, then $T$ is of a standard form.
\end{corollary}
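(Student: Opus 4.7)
The approach is to derive the result from Theorem \ref{2fi}, combined with the explicit description of one-sided identities given by Proposition \ref{1sided} and Corollary \ref{1sidedc}. Since char$(\F) = 0$ or char$(\F) > r+1$, we may assume without loss of generality that $t$ is symmetric (replacing it by its symmetrization does not change $T$). Linearize the commuting identity $[T(x),x]=0$ by substituting $x = x_1+\cdots+x_{r+1}$ and extracting the multilinear component; this yields the two-sided multilinear functional identity
\begin{equation*}
\sum_{k=1}^{r+1} F_k(\overline{x}_{r+1}^k)\, x_k \;=\; \sum_{k=1}^{r+1} x_k\, F_k(\overline{x}_{r+1}^k),
\end{equation*}
where $F_k(\overline{x}_{r+1}^k) = r!\, t(x_1,\dots,\hat{x}_k,\dots,x_{r+1})$. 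The characteristic hypothesis ensures that this multilinearized identity is equivalent to the original one (i.e., polarization is reversible).

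Applying Theorem \ref{2fi} with $K = L = \{1,\dots,r+1\}$, each $F_k$ decomposes as $F_k = \sum_{l\neq k} x_l\, p_{kl} + \lambda_k + \varphi_k$, where $\lambda_k$ is scalar-valued, $p_{kl}$ is multilinear, and $\varphi_k$ satisfies the one-sided identity $\sum_k \varphi_k(\overline{x}_{r+1}^k)\, x_k = 0$; there is a dual decomposition $F_k = \sum_{l\neq k} p_{lk}\, x_l + \lambda_k + \psi_k$ coming from the right-hand side. Setting $x_i = x$ for all $i$ and dividing by $r!$ yields
\begin{equation*}
T(x) \;=\; x\,P(x) + \mu(x)\cdot 1 + \Phi(x),
\end{equation*}
where $P$ is the trace of an $(r-1)$-linear function, $\mu$ is the trace of a scalar-valued $r$-linear function, and $\Phi$ is the trace of an $r$-linear function whose polarization satisfies the above one-sided identity. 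By Corollary \ref{1sidedc} (or by \cite[Corollary 2.23]{FIbook} when $r+1 \le n$, which forces $\varphi_k = 0$), we obtain $\Phi(x) = \det(x)\, S(x)$ with $S$ the trace of an $(r-n)$-linear function. The Cayley--Hamilton identity expresses $\det(x)\cdot 1 = \sum_{i=1}^{n}\nu_i(x)\,x^i$ with scalar coefficients $\nu_i(x)$ that are polynomials in $\tr(x^j)$.

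The main obstacle is rewriting $xP(x)$ and $\det(x)S(x)$ in the standard form $\sum_{i=0}^{r}\mu_i(x)x^i$, since a priori neither $P(x)$ nor $S(x)$ commutes with $x$. To finish, one exploits both the left and right decompositions $T(x) = xP(x) + \mu(x) + \Phi(x) = Q(x)x + \mu(x) + \Psi(x)$ simultaneously: the difference $xP(x) - Q(x)x = \Psi(x) - \Phi(x)$ lies in the one-sided span and is therefore of the form $\det(x)\cdot(\text{lower-degree trace})$, while the commuting identity $[T(x), x] = 0$ gives $x[P(x), x] = [\Phi(x), x]$. An induction on $r$ (with trivial base cases $r=0, 1$) applied to the strictly lower-order traces $P(x)$ and $S(x)$ then absorbs all non-standard contributions into the scalar-coefficient representation, yielding $T(x) = \sum_{i=0}^{r}\mu_i(x)\, x^i$ as required.
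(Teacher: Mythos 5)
Your setup (linearizing $[T(x),x]=0$, applying Theorem \ref{2fi}, and specializing all variables to $x$) matches the paper, but the final paragraph, where all the real difficulty sits, is a gap rather than a proof. The decomposition $T(x)=xP(x)+\mu(x)+\Phi(x)$ with $\Phi(x)=\det(x)S(x)$ does not feed into your induction: the induction hypothesis is about \emph{commuting} traces, and neither $P$ nor $S$ is known to be commuting at this stage. The commuting identity only gives $x[P(x),x]=-[\Phi(x),x]$, which does not yield $[P(x),x]=0$ unless you first know $[\Phi(x),x]=0$; and $\det(x)S(x)$ with a matrix-valued $S$ is not of standard form, nor does Cayley--Hamilton help, since it rewrites $\det(x)\cdot 1$, not $\det(x)S(x)$, as a combination of powers of $x$. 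Likewise, observing that $xP(x)-Q(x)x=\Psi(x)-\Phi(x)$ is of the form $\det(x)\cdot(\text{trace})$ gives no mechanism for ``absorbing'' these terms; the claim that induction on the lower-order traces ``absorbs all non-standard contributions'' is precisely what needs to be proved.

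The paper closes this gap by a different mechanism that your proposal never invokes: after symmetrizing, the \emph{same} function $F$ appears as the coefficient on both sides, so Theorem \ref{2fi} produces $m=r+1$ simultaneous decompositions $T(x)=xP_k(x)+\Lambda_k(x)+\Phi_k(x)$, one for each index $k$, and the one-sided identity $\sum_k\varphi_k(\overline{x}_m^k)x_k=0$ specializes (using invertibility of a generic matrix) to $\sum_k\Phi_k=0$. Subtracting the $k$-th decomposition from the first and summing over $k$ then yields $m\Phi_1(x)=x\bigl(\cdots\bigr)+\text{scalars}$, so that $T(x)-xP(x)\in\F$ for a single trace $P$ of an $(r-1)$-linear map; this is where the hypothesis char$(\F)>r+1$ (division by $m$ and by $r!$) enters. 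Only then does $[xP(x),x]=0$, again via genericity, give $[P(x),x]=0$, so that the induction hypothesis genuinely applies to $P$. In particular the explicit determinantal description of one-sided solutions (Corollary \ref{1sidedc}), which your argument leans on, is not needed and by itself is not enough: you need the averaging over the $r+1$ decompositions to eliminate the one-sided term, and that step is absent from your proposal.
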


\begin{proof}
The proof is by induction on $r$. The  $r=0$ case is trivial, so we may assume that $r> 0$ and that the result holds for $r-1$. 

Note that the complete linearization of $[T(x),x]=0$  yields the functional identity
\begin{equation*}
\sum_{k=1}^{m}F(\overline{x}_m^k)x_k = \sum_{l=1}^{m}x_lF(\overline{x}_m^l)\quad\mbox{for all $x_1\dots,x_{m}\in M_n(\F)$,}
\end{equation*} 
where $m=r+1$ and 
\begin{equation*} 
F(x_1,\dots,x_r) = \sum_{\sigma\in S_r} t(x_{\sigma(1)},\dots,x_{\sigma(r)}).
\end{equation*} 
Applying Theorem \ref{2fi}  we see that for each
$k=1,\dots,m=r+1$ there exist functions \begin{align*}
&\varphi_k:M_n(\F)^{m-1}\to M_n(\F), \,\,k\in \N_m,\\
&p_{kl}:M_n(\F)^{m-2}\to M_n(\F), \,\,k,l\in \N_m,\,\,k\not=l,\\
&\lambda_k:M_n(\F)^{m-1}\to \F,\,\,k\in \N_m,
\end{align*}
such that 
\begin{equation}\label{enacba}\sum_{k=1}^m\varphi_k(\overline{x}_m^k)x_k =0\end{equation}
 and
$$
F(\ol{x}_m^k)  =   \displaystyle \sum_{l\in \N_m,\,l\not=k}x_lp_{kl}(\ol{x}_m^{kl})
+\lambda_k(\ol{x}_m^k) + \varphi_k(\ol{x}_m^k).
$$
Setting $x_1=\dots=x_m = x$ and using $F(x,\dots,x) = r! T(x)$ we arrive at
\begin{equation} \label{equ1}
T(x)=xP_k(x)+\Lambda_k(x)+\Phi_k(x), \quad 1\leq k\leq m,
\end{equation}
 where  $P_k$ is the trace of $\frac{1}{r!}\sum_{l\in \N_m,\,l\not=k} p_{kl}$, $\Lambda_k$ is the trace of $\frac{1}{r!} \lambda_{k}$, and $\Phi_k$ is the trace of $\frac{1}{r!} \varphi_{k}$. Note that  \eqref{enacba} yields $\sum_{k=1}^m \Phi_k(x)x=0$ for all $x\in M_n(\F)$. Interpreting this identity as $AX=0$ where $X$ is a generic matrix and $A$ is the matrix corresponding to $\sum_{k=1}^m \Phi_k$, it follows (since $X$ is invertible) that $A =0$, and hence that 
 \begin{equation} \label{equ2} \sum_{k=1}^m \Phi_k=0.\end{equation}
 Further, \eqref{equ1} shows that
$$
\Phi_1(x)-\Phi_\ell(x)=x\bigl(P_\ell(x)-P_1(x)\bigr)+\Lambda_\ell(x)-\Lambda_1(x),\quad 2\leq \ell\leq m.$$
Summing up these identities and using \eqref{equ2} we get
\begin{equation}\label{m3}
m\Phi_1(x)=x\Bigl(\sum_{\ell=2}^{m}P_\ell(x)-(m-1)P_1(x)\Bigr)+\sum_{\ell=2}^{m}\Lambda_\ell(x)-(m-1)\Lambda_1(x).
\end{equation}
Accordingly,
$$
\Phi_1(x)-x\tilde P_1(x)\in\F \quad\mbox{for all $x\in M_n(\F)$,}
$$
where $\tilde{P}_1= \frac{1}{m}\Bigl(\sum_{\ell=2}^{m}P_\ell-(m-1)P_1\Bigr)$. Setting $P=P_1 + \tilde{P}_1$ we thus see from \eqref{equ1} that
\begin{equation} \label{equ3}
T(x)-x P(x)\in\F \quad\mbox{for all $x\in M_n(\F)$.}
\end{equation}
Since $T$ is commuting it follows that $[xP(x),x]=0$ for every $x\in M_n(\F)$, which can be written as $x[P(x),x]=0$. Interpreting this identity through a generic matrix we see, similarly as above, that 
$[P(x),x]=0$.
Thus, $P$ is a commuting trace of an $(r-1)$-linear function. By induction assumption, $P$ is of a standard form. From \eqref{equ3} we thus see that  $T$ is of a standard form, too.
\end{proof}

A more careful analysis is needed if one wishes to  further  weaken the assumption on char$(\F)$. Let us examine only the case where $r=2$, which is the one that plays the most prominent role in applications of functional identities. It naturally appears in the study of Lie isomorphisms, commutativity preserving maps, Lie-admissible maps, Poisson algebras, and several other topics  (see \cite{Bre04} and \cite[Section 1.4]{FIbook}). Any new information on commuting traces of bilinear functions is therefore of interest.

We will thus consider the case where $T$ is a commuting trace of a bilinear map $F$. We have to add the assumption that 
$F$ satisfies the functional identity
\begin{equation} \label{eqB}
F(x,y)z + F(z,x)y + F(y,z)x = zF(x,y) + yF(z,x) + xF(y,z).
\end{equation} 
One way of looking at \eqref{eqB} is that $(x,y)\mapsto F(x,y)$ is a nonassociative commutative product on $M_n(F)$ which is connected with the ordinary product through a version of the Jacobi identity:
$[F(x,y),z]  + [F(z,x),y] + [F(y,z),x]=0$.

Note that \eqref{eqB} is actually equivalent to $T$ being commuting provided that $F$ is symmetric and char$(F)\ne 2,3$. However, we do not wish to impose these assumptions. The point of the next corollary is that it  has no restrictions on char$(F)$.



\begin{corollary}
Let $T$ be the trace of a    bilinear function $F:M_n(\F)^2\to M_n(\F)$. If $T$ is commuting and $F$ satisfies \eqref{eqB}, then $T$ is of a standard form.
\end{corollary}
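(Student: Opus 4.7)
The plan is to apply Theorem~\ref{2fi} to \eqref{eqB}, to pin down the one-sided remainder via Proposition~\ref{1sided}, and then to close the argument through Cayley--Hamilton and a short two-variable functional identity. Viewing \eqref{eqB} as an instance of \eqref{e11} with $m=3$, $K=L=\{1,2,3\}$, and $F_k=G_k$ being the appropriate evaluations of $F$, Theorem~\ref{2fi} produces bilinear maps $p_{kl}$, scalar-valued bilinear $\lambda_k$, and bilinear $\varphi_k$ with $\sum_{k=1}^3\varphi_k(\ol{x}_3^k)x_k=0$, such that for instance
$$F(x_2,x_3)=x_2p_{12}(x_3)+x_3p_{13}(x_2)+\lambda_1(x_2,x_3)+\varphi_1(x_2,x_3),$$
together with the cyclic analogues. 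Setting $x_1=x_2=x_3=x$ yields $T(x)=xP_k(x)+\Lambda_k(x)+\Phi_k(x)$ for each $k\in\{1,2,3\}$, where $P_k,\Lambda_k,\Phi_k$ are the traces of the corresponding (multi)linear functions.

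Next I would determine the shape of $\Phi_k$ from Proposition~\ref{1sided}. The index set $I$ in that proposition satisfies $|I|=n+1\le m=3$, so $\varphi_k=0$ (hence $\Phi_k=0$) whenever $n\ge 3$; for $n=2$ the only choice is $I=\{1,2,3\}$, and a direct computation after $x_i=x$ gives $\Phi_k(x)=\det(x)A_k$ for some constant $A_k\in M_2(\F)$. Cayley--Hamilton in its matrix form $\det(x)\cdot 1=\tr(x)x-x^2$ then rewrites $\Phi_k(x)=\tr(x)xA_k-x^2A_k$, and introducing the linear function $P_k'(x):=P_k(x)+\tr(x)A_k$ produces the uniform expression
$$T(x)=xP_k'(x)-x^2A_k+\Lambda_k(x)$$
(with $A_k=0$ and $P_k'=P_k$ when $n\ge 3$).

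The commuting condition is then applied directly: from $[T(x),x]=0$ one computes $x[P_k'(x),x]=x^2[A_k,x]$, and the generic-matrix argument (a generic matrix being invertible over the fraction field) cancels the leading $x$ to yield $[P_k'(x),x]=x[A_k,x]$. Polarizing this quadratic identity in $x$ and introducing the linear function $Q_k(x):=P_k'(x)-xA_k$, the right-hand side is absorbed and one obtains the two-variable functional identity $[Q_k(x_1),x_2]+[Q_k(x_2),x_1]=0$. Since $m=2\le n$, only standard solutions exist (the classical result cited just before Subsection~\ref{sub41}), and matching the $F$- and $G$-sides forces the constant coefficients to coincide in the center $\F\cdot 1$ and the scalar-valued $\lambda_k$'s to agree, so that $Q_k(x)=\beta x+\lambda(x)$ for some $\beta\in\F$ and some linear $\lambda\colon M_n(\F)\to\F$. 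Unwinding, $T(x)=x(\beta x+\lambda(x)+xA_k)-x^2A_k+\Lambda_k(x)=\beta x^2+\lambda(x)x+\Lambda_k(x)$, which is of standard form.

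The main obstacle is keeping the whole argument characteristic-free, since the inductive proof of Corollary~\ref{co1} required inverting both $r!=2$ and $m=3$. The hypothesis \eqref{eqB} replaces the symmetrization-by-$1/r!$ step, and solving the two-variable identity on $Q_k$ via Theorem~\ref{2fi} (rather than by specializing $x_1=x_2$ and dividing by $2$) avoids any further division. The Cayley--Hamilton cancellation $-\tr(x)xA_k+\det(x)A_k=-x^2A_k$ is the key structural identity that eliminates the determinantal correction exactly; the delicate calculational point is deriving the form $\Phi_k(x)=\det(x)A_k$ for $n=2$ from Proposition~\ref{1sided} by tracking the signs and matrix-unit coefficients.
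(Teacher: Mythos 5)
Your route is genuinely different from the paper's and, in its main thrust, correct. The paper splits by characteristic: for char$(\F)\neq 2$ it simply cites \cite[Theorem 5.32, Remark 5.33]{FIbook}, and for char$(\F)=2$ it runs the averaging argument of Corollary \ref{co1} with $m=3$, exploiting that $m\Phi_1=3\Phi_1=\Phi_1$ to eliminate the one-sided part \emph{without ever computing it}, and then quotes the standardness of commuting linear maps. You instead give a uniform argument: you compute the one-sided obstruction explicitly from Proposition \ref{1sided} (zero for $n\geq 3$, $\Phi_k(x)=\det(x)A_k$ for $n=2$), cancel it via the Cayley--Hamilton relation $\det(x)\cdot 1=\tr(x)x-x^2$, and reduce to the two-variable identity $[Q(x_1),x_2]+[Q(x_2),x_1]=0$, which lies in the range $|K|=|L|=2\le n$ where only standard solutions exist. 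I checked the algebra ($[Q(x),x]=[P'(x),x]-x[A,x]$, the absorption of $x^2A_k$, the matching of the two standard expressions forcing the constant to be central) and it is sound. What your approach buys is a self-contained, characteristic-independent treatment that makes visible exactly how the nonstandard ($n=2$) part is a Cayley--Hamilton artifact; what the paper's approach buys is brevity and the avoidance of any structural analysis of $\varphi_k$.

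Two points need attention. The minor one: $n=1$ is silently excluded (your ``$\Phi_k=0$ for $n\ge3$, $\det(x)A_k$ for $n=2$'' dichotomy, and the final step $|K|=|L|=2\le n$, both fail there), though $M_1(\F)$ is trivially fine. The substantive one is the step ``cancel the leading $x$ by genericity'': to pass from $x[P'_k(x),x]=x^2[A_k,x]$ for all $x\in M_n(\F)$ to the same identity for a generic matrix, you need the commuting hypothesis $[T(X),X]=0$ at the level of generic matrices (the decomposition $T(X)=XP'_k(X)-X^2A_k+\Lambda_k(X)$ is already generic, being built from multilinear identities and the Cayley--Hamilton polynomial identity, but $[T(x),x]=0$ is a non-multilinear hypothesis on $M_n(\F)$ only). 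This transfer is automatic if $\F$ is infinite, and by interpolation also if $|\F|\ge 4$ (the map $x\mapsto[T(x),x]$ has degree at most $3$ in each entry); for char$(\F)\neq 3$ it also follows from \eqref{eqB} evaluated at three equal generic matrices, which gives $3[T(X),X]=0$ --- this is in effect the paper's key observation in char $2$, where $3=1$. But for $\F=\F_3$ neither argument applies ($3=0$ and the field is too small for interpolation), so as written your proof has a gap exactly at one small field, which matters because the whole point of the corollary is the absence of characteristic restrictions; the paper covers char $3$ through its citation. Either handle $\F_3$ separately (e.g.\ by polarizing the cubic identity $x[P'(x),x]-x^2[A,x]=0$ before cancelling, or by a scalar extension argument), or restrict the genericity claim and say how the remaining case is settled.
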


\begin{proof}
If char$(\F)\ne 2$, then the result follows from \cite[Theorem 5.32 and Remark 5.33]{FIbook}. Assume, therefore, that char$(\F)=2$. From now on we simply follow the proof of Corollary  \ref{co1}.
 Thus, we first derive  \eqref{equ1} with $m=3$, and after that \eqref{equ2} and  \eqref{m3}. Note that $m\Phi_1(x) =\Phi_1(x)$ since $m=3$. Therefore, \eqref{equ3} follows with $P$ being a linear function. This implies that $P$ is commuting, and hence is of a standard form \cite[Corollary 5.28]{FIbook}. Consequently, $T$ is of a standard form as well.
\end{proof}

These corollaries can be extended, by using scalar extensions and other results on functional identities, to considerably more general algebras than $M_n(\F)$ (cf. \cite{BS14}). Also, some other 
special identities that have proved to be useful for applications could  now be examined in greater detail. However, 
 we do not wish to make this section lengthy and technical. Our aim has been just to give an indication of the applicability of Theorem \ref{2fi}.

\end{document}